\theoremstyle{plain}
\newtheorem{theorem}{Theorem}
\newtheorem{corollary}[theorem]{Corollary}
\newtheorem{lemma}[theorem]{Lemma}
\newtheorem{proposition}[theorem]{Proposition}
\theoremstyle{definition}
\newtheorem{definition}[theorem]{Definition}
\newtheorem{remark}[theorem]{Remark}
\begin{document}
\baselineskip 18pt

\title[Unbounded continuous operators and UBSP in Banach lattices]
{Unbounded continuous operators and unbounded Banach-Saks property in Banach lattices}

\author{Omid Zabeti}

\address
  {Department of Mathematics, Faculty of Mathematics, Statistics, and Computer science,
   University of Sistan and Baluchestan, Zahedan,
   P.O. Box 98135-674. Iran}
\email{o.zabeti@gmail.com}

\keywords{Unbounded continuous operator, pre-unbounded continuous operator, the unbounded Banach-saks property, reflexive Banach lattice, order continuous Banach lattice.}
\subjclass[2010]{Primary: 46B42. Secondary: 47B65.}
\maketitle
\date{\today}
\begin{abstract}
Motivated by the equivalent definition of a continuous operator between Banach spaces in terms of weakly null nets, we introduce unbounded continuous operators by replacing weak convergence with the unbounded absolutely weak convergence ( $uaw$-convergence) in the definition of a continuous operator between Banach lattices. We characterize order continuous Banach lattices and reflexive Banach lattices in terms of these spaces of operators. Moreover, motivated by characterizing of a reflexive Banach lattice in terms of unbounded absolutely weakly Cauchy  sequences, we consider pre-unbounded operators between Banach lattices which maps $uaw$-Cauchy sequences to weakly ( $uaw$- or norm) convergent sequences. This allows us to characterize $KB$-spaces and reflexive spaces in terms of these operators, too. Furthermore, we consider the unbounded Banach-Saks property as an unbounded version of the weak Banach-Saks property. There are many considerable relations between spaces possessing the unbounded Banach-Saks property with spaces fulfilled by different types of the known Banach-Saks property. In particular, we characterize order continuous Banach lattices in terms of these relations, as well.
\end{abstract}

\section{motivation and introduction}
Let us first start with some motivation. Suppose $X$ and $Y$ are Banach spaces and $T:X\to Y$ is an operator. It is known that (see \cite[Theorem 5.22]{AB} for example), $T$ is continuous if and only if it preserves weakly null nets. On the other hand, unbounded convergences have received much attention recently with some deep and inspiring results ( see \cite{DOT, KMT, GTX, Z}). In particular, unbounded absolute weak convergence ( $uaw$-convergence, for short) as a weak version of the unbounded norm convergence and also as an unbounded version of the weak convergence has been investigated in \cite{Z}. Furthermore, unbounded absolute weak Dunford-Pettis operators ( $uaw$-Dunford-Pettis operators, in brief), as an unbounded version of Dunford-Pettis operators, have been considered recently in \cite{EGZ}. In particular, it is shown in \cite{Z} that unbounded absolute weak convergence enables us to characterize order continuous Banach lattices and also reflexive Banach lattices, thoroughly. So, it is of independent interest to consider continuous operators that utilize $uaw$-convergence to see whether or not we can characterize order continuity or reflexivity of the underlying space in terms of these classes of operators. Furthermore, it would also be worthwhile to investigate some possible relations between these classes of operators with the well-known classes of operators such as $M$-weakly compact operators, $L$-weakly compact operators and weakly compact ones; this is partially done for Dunford-Pettis operators in \cite{EGZ}.
Again, let us consider some motivation. It is proved in \cite[Theorem 8]{Z} that, a Banach lattice $E$ is reflexive if and only if every bounded $uaw$-Cauchy sequence in $E$ is weakly convergent; this, in turn, implies that the identity operator on $E$ possesses this property: it maps every bounded $uaw$-Cauchy sequence to a weakly convergent sequence. This motivates us to define operators which respect this property.
So, it would be interesting to consider pre-unbounded continuous operators by replacing $uaw$-null sequences with $uaw$-Cauchy sequences. This enables us to characterize $KB$-spaces and reflexive Banach lattices in terms of these classes of operators, too.

Finally, let us consider the last motivation. It is known that a Banach space $X$ possesses the weak Banach-Saks property if every weakly null sequence in $X$ has a subsequence whose Ces\`{a}ro means is convergent. Further to the previous attitude, it would be  inspiring to replace weak convergence in this definition by $uaw$-convergence. Moreover, it is also interesting to investigate possible relations of the unbounded version of the Banach-Saks property with the other known cases of this property.

We organize the paper as follows.
In the second section, we investigate unbounded continuous operators by replacing weak convergence with the $uaw$-convergence in the definition of a continuous operator and characterize order continuity in the dual space and also reflexive spaces in terms of these classes of operators. In the third section, we investigate pre-unbounded continuous operators by replacing $uaw$-null sequences with $uaw$-Cauchy sequences; we study their relations with the unbounded continuous operators. Furthermore, we characterize $KB$-spaces and reflexive Banach lattices in terms of them, too. In the fourth section, we consider the unbounded Banach-Saks property by replacing weak convergence in the definition of the weak Banach-saks property by $uaw$-convergence. We investigate its relations with other important types of the Banach-Saks property; namely, the Banach-saks property, the weak Banach-Saks property, the disjoint Banach-Saks property and the disjoint weak Banach-Saks property. There are, again, many interesting characterizations of order continuous Banach lattices in terms of the relations between the unbounded Banach-Saks property and the known kinds of this property.

Before we proceed more, let us consider some preliminaries.

Suppose $E$ is a Banach lattice. A net $(x_{\alpha})$ in $E$ is said to be {\bf unbounded absolute weak convergent} ($uaw$-convergent, for short) to $x\in E$ if for each $u\in E_{+}$, $|x_{\alpha}-x|\wedge u\rightarrow0$ weakly, in notation $x_{\alpha}\xrightarrow{uaw}x$. $(x_{\alpha})$ is {\bf unbounded norm convergent} ($un$-convergent, in brief) if $\||x_{\alpha}-x|\wedge u\|\rightarrow 0$, in notation $x_{\alpha}\xrightarrow{un}x$. Both convergences are topological. For ample information on these concepts, see \cite{DOT, KMT, Z}.

For undefined terminology and concepts, we refer the reader to \cite{AB}. All operators in this note, are assumed to be continuous, unless otherwise stated, explicitly.
\section{unbounded continuous operators}
First, we define unbounded continuous operators between Banach lattices; the third part is defined initially in \cite{EGZ}.
\begin{definition}
Suppose $E,F$ are Banach lattices and $X$ is a Banach space. Then, we have the following notions.
\begin{itemize}
\item[\em (i)] {A continuous operator $T:E\to X$ is said to be unbounded continuous if for each bounded $uaw$-null sequence $(x_n)\subseteq E$, $(T(x_n))$ is weakly null}.
\item[\em (ii)]{A continuous operator $T:E\to F$ is called $uaw$-continuous if for each bounded $uaw$-null sequence $(x_n)\subseteq E$, $(T(x_n))$ is also $uaw$-null}.
\item[\em (iii)] {An operator $T:E\to X$ is referred to as $uaw$-Dunford-Pettis  if for each bounded $uaw$-null sequence $(x_n)\subseteq E$, $(T(x_n))$ is norm null}.

\end{itemize}
\end{definition}
Now, we investigate some partial relations.
\begin{proposition}\label{135}
Suppose $E$ and $F$ are Banach lattices such that $F'$ is order continuous. Then every $uaw$-continuous operator $T:E\to F$ is unbounded continuous.
\end{proposition}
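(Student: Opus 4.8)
The plan is to combine the definition of $uaw$-continuity with the known behaviour of $uaw$-convergence in a Banach lattice whose dual is order continuous. The single external fact I intend to invoke from \cite{Z} is that, when $F'$ is order continuous, $uaw$-convergence and weak convergence agree on norm bounded subsets of $F$; in particular, every norm bounded $uaw$-null net (a fortiori sequence) in $F$ is weakly null. Everything else is bookkeeping.

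Concretely, I would argue as follows. Let $(x_n)\subseteq E$ be an arbitrary bounded $uaw$-null sequence; the goal is to show that $(T(x_n))$ is weakly null in $F$. Since $T$ is $uaw$-continuous, $(T(x_n))$ is $uaw$-null in $F$. Since $T$ is (norm-)continuous and $(x_n)$ is norm bounded, $(T(x_n))$ is a norm bounded sequence in $F$. Now the order continuity of $F'$ forces this norm bounded $uaw$-null sequence to be weakly null, so $(T(x_n))$ is weakly null. As $(x_n)$ was an arbitrary bounded $uaw$-null sequence, $T$ is unbounded continuous.

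There is essentially no obstacle here: the whole content is carried by the cited characterization of order continuity of $F'$ via $uaw$-convergence, and the continuity of $T$ is used only to guarantee that $(T(x_n))$ stays norm bounded. The one minor point to notice is that, although the characterization in \cite{Z} is naturally phrased for nets, only its sequential consequence is needed, which is immediate; and strictly speaking one uses only the implication ``$F'$ order continuous $\Rightarrow$ every norm bounded $uaw$-null net in $F$ is weakly null'', not the full coincidence of the two convergences on bounded sets.
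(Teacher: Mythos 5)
Your argument is correct and is essentially identical to the paper's proof: apply $uaw$-continuity of $T$ to get that $(T(x_n))$ is $uaw$-null, then invoke the characterization of order continuity of $F'$ from \cite[Theorem 7]{Z} to upgrade this to weak nullity. Your explicit remark that $(T(x_n))$ remains norm bounded (needed to apply that theorem) is a small but worthwhile point that the paper leaves implicit.
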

\begin{proof}
Suppose $T$ is $uaw$-continuous. For every norm bounded $uaw$-null sequence $(x_n)$ in $E$, $T(x_n)\xrightarrow{uaw}0$. By \cite[Theorem 7]{Z}, $T(x_n)\xrightarrow{w}0$, as claimed.
\end{proof}
Since the lattice operations in an $AM$-space are weakly sequentially continuous, we have the following.
\begin{corollary}\label{134}
Suppose $E$ is a Banach lattice and $F$ is an $AM$-space. Then an operator $T:E\to F$ is unbounded continuous if and only if it is $uaw$-continuous.
\end{corollary}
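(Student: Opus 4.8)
The plan is to establish both implications. One direction is free: by Proposition \ref{135}, if $F$ is an $AM$-space then $F'$ is an $AL$-space, hence order continuous, so every $uaw$-continuous operator $T:E\to F$ is automatically unbounded continuous. Thus only the forward implication requires work: assuming $T:E\to F$ is unbounded continuous, I must show it is $uaw$-continuous, i.e. that a bounded $uaw$-null sequence $(x_n)$ in $E$ has $T(x_n)\xrightarrow{uaw}0$ in $F$.

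So let $(x_n)$ be a bounded $uaw$-null sequence in $E$. Since $T$ is unbounded continuous, $T(x_n)\xrightarrow{w}0$ in $F$. I now want to upgrade weak null to $uaw$-null. Fix $u\in F_+$; I need $|T(x_n)|\wedge u\xrightarrow{w}0$. The key structural fact is the one flagged in the sentence preceding the corollary: in an $AM$-space the lattice operations are weakly sequentially continuous (this is standard — e.g. Aliprantis--Burkinshaw; an $AM$-space embeds lattice-isometrically into some $C(K)$, where weak sequential continuity of $\vee,\wedge$ is classical, or one cites it directly from \cite{AB}). Applying this to the pair of weakly null sequences $T(x_n)$ and (the constant sequence) $0$, together with the fixed element $u$, gives $|T(x_n)|\wedge u = \bigl||T(x_n)| - 0\bigr|\wedge u \xrightarrow{w} |0|\wedge u = 0$. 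Since $u\in F_+$ was arbitrary, $T(x_n)\xrightarrow{uaw}0$, which is exactly $uaw$-continuity.

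The only subtlety — and the step I would be most careful about — is making sure the invocation of weak sequential continuity of the lattice operations is applied correctly: the map $(a,b)\mapsto |a|\wedge b$ built from $\vee$, $\wedge$, $+$, $-$ is weakly sequentially continuous as a function of $a$ when $b$ is held fixed (or varies over a weakly convergent sequence), and $T(x_n)$ is a genuine sequence, so this is legitimate; the $AM$-space hypothesis is used only here and only for $F$. No net version is needed since unbounded continuity and $uaw$-continuity in Definition \ref{135}'s sense are phrased in terms of sequences. With that observation the proof is essentially a one-line deduction in each direction.
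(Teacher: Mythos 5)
Your proposal is correct and is essentially the paper's intended argument: the reverse implication follows from Proposition \ref{135} because the dual of an $AM$-space is an $AL$-space and hence order continuous, and the forward implication uses exactly the weak sequential continuity of the lattice operations in $AM$-spaces that the paper flags in the sentence introducing the corollary. No discrepancies to report.
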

For the converse, we have the following.
\begin{proposition}
Suppose $E$ and $F$ are Banach lattices and $T:E\to F$ is a positive operator. If $T$ is unbounded continuous, then it is $uaw$-continuous.
\end{proposition}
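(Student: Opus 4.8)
The plan is to take a norm bounded $uaw$-null sequence $(x_n)$ in $E$ and show that $T(x_n)\xrightarrow{uaw}0$ in $F$, using positivity of $T$ to dominate $|T(x_n)|$ by $T(|x_n|)$. First I would recall that since $(x_n)$ is $uaw$-null, in particular $|x_n|\xrightarrow{uaw}0$ (this is immediate from the definition, since $\big||x_n|\big|\wedge u = |x_n|\wedge u$), and $(|x_n|)$ is still norm bounded. The key inequality is that for a positive operator $T$ and any $u\in F_+$ we have, for each $n$,
\[
|T(x_n)|\wedge u \;\le\; T(|x_n|)\wedge u,
\]
so it suffices to show $T(|x_n|)\wedge u\to 0$ weakly. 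Thus without loss of generality we may assume $x_n\ge 0$ for all $n$, and we must show $T(x_n)\wedge u\xrightarrow{w}0$ for each fixed $u\in F_+$.

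Next I would exploit the hypothesis that $T$ itself (as a continuous operator $E\to F$, hence $E\to F$ viewed into its own lattice) is unbounded continuous: by definition this gives $T(x_n)\xrightarrow{w}0$ in $F$, i.e.\ $\langle f, T(x_n)\rangle\to 0$ for every $f\in F'$. The remaining point is to upgrade weak convergence of $(T(x_n))$ together with the uniform domination to weak convergence of the truncations $T(x_n)\wedge u$. For this I would argue with positive functionals: fix $0\le f\in F'$. Since $0\le T(x_n)\wedge u\le T(x_n)$ and also $0\le T(x_n)\wedge u\le u$, one has $0\le f(T(x_n)\wedge u)\le f(T(x_n))$; but $f(T(x_n))$ need not be nonnegative — here instead I would use $0\le T(x_n)\wedge u\le (T(x_n))^+$ and the fact that $(T(x_n))$ being weakly null in the Banach lattice $F$ gives, via Banach lattice theory, control on the positive parts against order intervals. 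Concretely, the clean route is: $(T(x_n))$ weakly null and order bounded in absolute value by $T(u_0)$ on a principal band is not automatic, so instead I would localize — replace $u$ by noting $T(x_n)\wedge u \le T(x_n)^+\wedge u$ and that $T(x_n)^+ \xrightarrow{w} 0$ is not generally true either.

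The honest main obstacle, then, is precisely moving from ``$T(x_n)\xrightarrow{w}0$'' to ``$T(x_n)\wedge u\xrightarrow{w}0$'': weak convergence is not compatible with lattice truncation in a general Banach lattice. To get around this I would instead run the whole argument inside the ideal $F_u$ generated by $u$ (equivalently, use that $|T(x_n)|\wedge u$ lives in the order interval $[0,u]$, which is weakly compact in... no — it is merely norm bounded). The workable approach is: by the positivity of $T$ and the $uaw$-nullity of $(x_n)$ applied \emph{in $E$}, show $|x_n|\wedge v\xrightarrow{w}0$ for every $v\in E_+$; then use a domination/truncation estimate of the form $T(x_n)^+\wedge u \le T\big(x_n^+\wedge w\big) + (\text{small})$ for suitable $w$, obtained by decomposing $x_n^+ = x_n^+\wedge w + (x_n^+ - w)^+$ and using that $T$ is bounded, to push things through. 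I expect this truncation-splitting estimate to be the technical heart; the rest is the routine observation that unbounded continuity of $T$ handles the ``large'' piece and norm-boundedness of $T$ plus $uaw$-nullity of $(x_n)$ handles the ``small'' piece, after which one concludes $T(x_n)\xrightarrow{uaw}0$, i.e.\ $T$ is $uaw$-continuous.
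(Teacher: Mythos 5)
You correctly identify the reduction that the paper leaves implicit: since $\bigl||x_n|\bigr|\wedge u=|x_n|\wedge u$, the sequence $(|x_n|)$ is again bounded and $uaw$-null, and positivity of $T$ gives $|T(x_n)|\wedge u\le T(|x_n|)\wedge u$, so one may assume $x_n\ge 0$. This is exactly where the paper's (very short) proof starts. But after that reduction you talk yourself out of the correct finish. Once $x_n\ge 0$, positivity of $T$ gives $T(x_n)\ge 0$, so your worry that ``$f(T(x_n))$ need not be nonnegative'' is unfounded: for $f\in F'_+$ one has $0\le f\bigl(T(x_n)\wedge u\bigr)\le f\bigl(T(x_n)\bigr)\to 0$ by unbounded continuity, and since $F'$ is spanned by its positive cone this already yields $T(x_n)\wedge u\xrightarrow{w}0$, i.e.\ $T(x_n)\xrightarrow{uaw}0$. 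The general principle you invoke --- that weak convergence is not compatible with lattice truncation --- is true, but it is irrelevant here precisely because the sequence is positive and dominated: $0\le y_n\le z_n$ with $z_n$ weakly null and positive forces $y_n$ weakly null. This is the content of the paper's remark that weak convergence of the positive sequence $T(x_n)$ is the same as convergence in the absolute weak topology, which in turn dominates $uaw$-convergence.

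The concrete gap, then, is that your proposal never closes: the final ``truncation-splitting estimate'' $T(x_n)^+\wedge u\le T(x_n^+\wedge w)+(\text{small})$ is left as an expectation rather than proved, and it is not needed. As written the argument is incomplete, even though every ingredient of the correct (two-line) conclusion already appears earlier in your own text.
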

\begin{proof}
Suppose $(x_n)$ is a bounded positive $uaw$-null sequence in $E$. By the assumption, $T(x_n)\xrightarrow{w}0$. This implies that $T(x_n)\rightarrow0$ in the absolute weak topology. Therefore, $T(x_n)\xrightarrow{uaw}0$.
\end{proof}
\begin{corollary}
Suppose $E$ and $F$ are Banach lattices such that $F'$ possesses an order continuous norm. Then, a positive operator $T:E\to F$ is unbounded continuous if and only if it is $uaw$-continuous.
\end{corollary}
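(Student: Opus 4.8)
The plan is to combine the two preceding results directly. The Corollary asserts that when $F'$ is order continuous, a positive operator $T : E \to F$ is unbounded continuous if and only if it is $uaw$-continuous; since both implications have essentially been handled, the proof is a matter of citing Proposition~\ref{135} and the immediately preceding Proposition.

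First I would prove the implication ``$uaw$-continuous $\Rightarrow$ unbounded continuous.'' This is exactly Proposition~\ref{135}, which requires only that $F'$ be order continuous and does not use positivity of $T$ at all: if $(x_n)$ is a bounded $uaw$-null sequence in $E$, then $T(x_n) \xrightarrow{uaw} 0$ by hypothesis, and \cite[Theorem 7]{Z} upgrades $uaw$-null to weakly null in $F$ because $F'$ is order continuous. Hence $T$ is unbounded continuous.

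Next I would prove ``unbounded continuous $\Rightarrow$ $uaw$-continuous'' for the positive operator $T$. This is the preceding Proposition: given a bounded $uaw$-null sequence $(x_n)$ in $E$, one reduces to the positive case by noting that $|x_n| \xrightarrow{uaw} 0$ and $|x_n|$ is again bounded, then uses $0 \le T(|x_n|)$ together with unbounded continuity to get $T(|x_n|) \xrightarrow{w} 0$; since weak convergence of a net in a Banach lattice to $0$ through positive elements implies convergence to $0$ in the absolute weak topology, and the latter dominates $uaw$-convergence, we obtain $T(x_n) \xrightarrow{uaw} 0$. Thus $T$ is $uaw$-continuous.

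I do not anticipate a genuine obstacle here: the statement is a clean conjunction of two results already in hand, and the only point deserving a word of care is that Proposition~\ref{135} does not need positivity while its converse does, so the hypothesis ``$T$ positive'' is attached to the Corollary purely to make the second implication available. One should also confirm that ``$F'$ possesses an order continuous norm'' in the Corollary is literally the hypothesis ``$F'$ is order continuous'' used in Proposition~\ref{135}, which it is.
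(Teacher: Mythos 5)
Your proposal is correct and matches the paper exactly: the corollary is stated without proof precisely because it is the conjunction of Proposition~\ref{135} (which needs only that $F'$ is order continuous) and the immediately preceding proposition (which needs positivity of $T$), and you cite both in the right roles. Your extra remark on reducing a general $uaw$-null sequence to the positive case via $\lvert x_n\rvert$ and the domination $\lvert T(x_n)\rvert \le T(\lvert x_n\rvert)$ is a sensible filling-in of a detail the paper leaves implicit, but it does not change the route.
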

The following lemma is an easy application of \cite[Theorem 7]{Z}.
\begin{lemma}\label{8}
Suppose $E$ is a Banach lattice. Then $E'$ is order continuous if and only if the identity operator $I$ on $E$ is unbounded continuous.
\end{lemma}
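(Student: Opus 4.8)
The plan is to read off both implications directly from the definition of an unbounded continuous operator in part~(i) together with \cite[Theorem 7]{Z}. Unwinding that definition, the identity operator $I$ on $E$ is unbounded continuous if and only if every norm bounded $uaw$-null sequence $(x_n)$ in $E$ satisfies $x_n=I(x_n)\xrightarrow{w}0$; in other words, $I$ is unbounded continuous precisely when $uaw$-convergence implies weak convergence for norm bounded sequences in $E$. So the whole statement is just the assertion that this sequential implication characterises order continuity of $E'$.

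For the forward direction I would assume $E'$ is order continuous, take an arbitrary norm bounded $uaw$-null sequence $(x_n)$ in $E$, and apply \cite[Theorem 7]{Z} to conclude $x_n\xrightarrow{w}0$; this is exactly the deduction already used in the proof of Proposition~\ref{135}. Hence $I$ is unbounded continuous. For the converse I would assume $I$ is unbounded continuous, so that every norm bounded $uaw$-null sequence in $E$ is weakly null, and then invoke \cite[Theorem 7]{Z} in the opposite direction to obtain that $E'$ is order continuous.

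The only point that needs a moment's attention is the match between the \emph{sequence} formulation built into the definition of an unbounded operator and the formulation of \cite[Theorem 7]{Z}: for the forward implication a norm bounded $uaw$-null sequence is in particular such a net, and for the converse one should use that when $E'$ fails to be order continuous \cite[Theorem 7]{Z} already produces a norm bounded $uaw$-null sequence in $E$ which is not weakly null. Apart from this bookkeeping there is no genuine obstacle; the mathematical content is entirely carried by \cite[Theorem 7]{Z}, which is precisely why the lemma is "an easy application" of it.
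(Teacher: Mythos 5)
Your proposal is correct and matches the paper's approach: the paper gives no written proof beyond declaring the lemma ``an easy application of \cite[Theorem 7]{Z}'', and your argument is precisely the unwinding of the definition of unbounded continuity for $I$ combined with both directions of that theorem. Your remark about reconciling the net and sequence formulations is a sensible bit of bookkeeping that the paper leaves implicit.
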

Furthermore, the following is immediate.
\begin{corollary}\label{133}
Suppose $E$ is a $KB$-space. Then the identity operator $I$ on $E$ is unbounded continuous if and only if $E$ is reflexive.
\end{corollary}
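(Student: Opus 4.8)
The plan is to reduce everything to Lemma~\ref{8} together with two standard facts about $KB$-spaces. By Lemma~\ref{8}, the identity operator $I$ on $E$ is unbounded continuous if and only if $E'$ is order continuous, so the assertion amounts to the following: for a $KB$-space $E$, the dual $E'$ is order continuous if and only if $E$ is reflexive.

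For the ``if'' direction, suppose $E$ is reflexive. Then $E'$ is reflexive as well, hence a $KB$-space, hence order continuous; by Lemma~\ref{8}, $I$ is unbounded continuous.

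For the ``only if'' direction, assume $I$ is unbounded continuous, so that $E'$ is order continuous by Lemma~\ref{8}. The main point is that an order continuous \emph{dual} Banach lattice is automatically a $KB$-space: given an increasing, norm-bounded net $(\phi_\alpha)$ in $E'_{+}$, Banach--Alaoglu supplies a $w^\ast$-cluster point $\phi\in E'$, and monotonicity of $(\phi_\alpha(x))$ for $x\in E_{+}$ forces $\phi=\sup_\alpha\phi_\alpha$ in $E'$ with $\phi_\alpha\uparrow\phi$; order continuity of $E'$ then yields $\|\phi_\alpha-\phi\|\to0$. Hence $E'$ is a $KB$-space. Since $E$ is a $KB$-space by hypothesis, the classical characterization of reflexivity for Banach lattices --- a Banach lattice is reflexive precisely when it and its dual are $KB$-spaces (see \cite{AB}) --- shows that $E$ is reflexive.

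I do not expect a serious obstacle here; the corollary really is ``immediate'' once Lemma~\ref{8} is available. The only step that needs a word of justification is the implication ``order continuous dual lattice $\Rightarrow$ $KB$-space'', and even that is routine via the $w^\ast$-topology (or may simply be quoted from the literature), after which reflexivity follows from the standard two-sided criterion above.
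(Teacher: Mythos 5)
Your proof is correct and follows the route the paper intends: the paper gives no explicit argument (it calls the corollary ``immediate'' after Lemma~\ref{8}), and the implicit reasoning is exactly yours --- reduce to ``$E'$ order continuous'' via Lemma~\ref{8}, note that an order continuous dual lattice is a $KB$-space, and invoke the standard fact that $E$ is reflexive iff $E$ and $E'$ are both $KB$-spaces. Your filled-in justification of the step ``order continuous dual $\Rightarrow$ $KB$-space'' via weak$^\ast$ compactness is sound.
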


Observe that $KB$-space assumption is necessary in Corollary \ref{133} and can not be omitted. The identity operator $I$ on $c_0$ is unbounded continuous but $c_0$ is not reflexive.

In this part, we provide some ideal properties.
\begin{proposition}\label{7}
 Let $E,F,G$ be Banach lattices and $X,Y$ be Banach spaces. Then the following assertions hold.
	\begin{itemize}
		\item[\em (i)] {If $T:X\to Y$ is Dunford-Pettis and $S:E\to X$ is unbounded continuous then $TS$ is $uaw$-Dunford-Pettis}.
		\item[\em (ii)] {If $T:F\to X$ is a $uaw$-Dunford--Pettis operator and $S:E\to F$ is $uaw$-continuous, then $TS$ is $uaw$-Dunford-Pettis}.
		\item[\em (iii)]
	{ If $T:X\to Y$ is continuous and $S:E\to X$ is an unbounded continuous operator, then $TS$ is also unbounded continuous}.
		\item[\em (iv)]{ If $T:F\to G$ is an onto lattice homomorphism  and $S:E\to F$ is $uaw$-continuous, then $TS$ is also $uaw$-continuous}.
	\end{itemize}
\end{proposition}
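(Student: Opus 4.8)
The plan is to prove all four parts by short composition arguments, tracking how the relevant convergence of a bounded $uaw$-null sequence $(x_n)\subseteq E$ is transformed when we apply $S$ and then $T$. Two elementary facts will be used repeatedly: a continuous (hence norm bounded) operator sends norm bounded sequences to norm bounded sequences, and a continuous linear operator between Banach spaces is weak-to-weak continuous.

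For (i), given a bounded $uaw$-null $(x_n)$ in $E$, unbounded continuity of $S$ yields $S(x_n)\xrightarrow{w}0$ in $X$; a weakly null sequence is norm bounded, so since $T$ is Dunford-Pettis it carries $S(x_n)$ to a norm null sequence in $Y$, i.e. $TS$ is $uaw$-Dunford-Pettis. For (iii), the same first step gives $S(x_n)\xrightarrow{w}0$, and weak-to-weak continuity of $T$ then gives $TS(x_n)\xrightarrow{w}0$, so $TS$ is unbounded continuous. For (ii), $uaw$-continuity of $S$ gives $S(x_n)\xrightarrow{uaw}0$ in $F$; since $S$ is continuous, $(S(x_n))$ is also norm bounded, hence it is a \emph{bounded} $uaw$-null sequence and the $uaw$-Dunford-Pettis operator $T$ sends it to a norm null sequence, so $TS$ is $uaw$-Dunford-Pettis.

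The only part requiring an extra observation is (iv), and that is where I expect the (minor) obstacle: one must check that an onto lattice homomorphism preserves $uaw$-nullity. After applying $S$ we have a bounded $uaw$-null sequence $(S(x_n))$ in $F$, and I would fix $u\in G_{+}$ and use surjectivity of $T$ to write $u=T(v)$ with $v\in F_{+}$ — this is possible because a surjective lattice homomorphism maps $F_{+}$ onto $G_{+}$ (if $u=T(w)$ then $u=u^{+}=T(w)^{+}=T(w^{+})$). Since $T$ preserves lattice operations, $|T(S(x_n))|\wedge u=T(|S(x_n)|)\wedge T(v)=T\bigl(|S(x_n)|\wedge v\bigr)$; because $|S(x_n)|\wedge v\xrightarrow{w}0$ in $F$ and $T$ is weak-to-weak continuous, the right-hand side is weakly null in $G$, which is exactly $T(S(x_n))\xrightarrow{uaw}0$. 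Together with norm boundedness of $(T(S(x_n)))$, this shows $TS$ is $uaw$-continuous. Thus the substantive content is confined to (iv), and even there it reduces to the fact that surjective lattice homomorphisms are automatically $uaw$-to-$uaw$ continuous on norm bounded sequences; the remaining three parts are formal consequences of the definitions.
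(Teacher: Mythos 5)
Your proposal is correct and follows essentially the same route as the paper's proof: each part is the same two-step composition argument, and for (iv) you use the identical identity $|TS(x_n)|\wedge u=T(|S(x_n)|\wedge v)$ with a positive preimage $v$ of $u$. The only difference is that you spell out a couple of details the paper leaves implicit (boundedness of $(S(x_n))$ in (ii) and why the preimage can be taken in $F_{+}$), which is fine but not a different approach.
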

\begin{proof}
$(i)$. Suppose $(x_n)$ is a norm bounded $uaw$-null sequence in $E$. So, $S(x_n)\xrightarrow{w}0$. Therefore, $\|TS(x_n)\|\rightarrow 0$.

$(ii)$. Suppose $(x_n)$ is a norm bounded $uaw$-null sequence in $E$. Therefore, $S(x_n)\xrightarrow{uaw}0$. Thus, $\|TS(x_n)\|\rightarrow 0$.

$(iii)$. Suppose $(x_n)$ is a norm bounded $uaw$-null sequence in $E$. By the assumption, $S(x_n)\xrightarrow{w}0$ so that $TS(x_n)\xrightarrow{w}0$.

$(iv)$. First, observe that for each $u\in G_{+}$, there is a $v\in F_{+}$ with $T(v)=u$. Suppose $(x_n)$ is a norm bounded $uaw$-null sequence in $E$.
By the assumption, $S(x_n)\xrightarrow{uaw}0$ so that $|S(x_n)|\wedge v\xrightarrow{w}0$. Therefore,
\[|TS(x_n)|\wedge u= T(|S(x_n|)\wedge u=T(|S(x_n)|\wedge v)\xrightarrow{w}0.\]
\end{proof}
Furthermore, we see that unbounded continuous operators are very closed to continuous operators. The following lemma is an easy consequence of \cite[Theorem 7]{Z}.
\begin{lemma}\label{300}
Suppose $E$ is a Banach lattice whose dual space is order continuous and $X$ is any Banach space. Then every continuous operator $T:E\to X$ is unbounded continuous.
\end{lemma}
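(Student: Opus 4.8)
The plan is to reduce everything to the characterization of order continuity of $E'$ recorded in \cite[Theorem 7]{Z}, namely that on norm bounded subsets of a Banach lattice with order continuous dual, $uaw$-convergence coincides with weak convergence. Once that is in hand, the argument is almost immediate.

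First I would take an arbitrary norm bounded $uaw$-null sequence $(x_n)$ in $E$; this is the only kind of sequence that needs to be tested, since boundedness is built into the definition of an unbounded continuous operator. Because $(x_n)$ is norm bounded and $E'$ is order continuous, \cite[Theorem 7]{Z} yields $x_n\xrightarrow{w}0$ in $E$. Next I would use the elementary fact that any continuous linear operator $T:E\to X$ between Banach spaces is weak-to-weak continuous: for every $\varphi\in X'$ we have $\langle Tx_n,\varphi\rangle=\langle x_n,T'\varphi\rangle\to 0$ since $T'\varphi\in E'$. Hence $T(x_n)\xrightarrow{w}0$ in $X$, which is exactly the condition for $T$ to be unbounded continuous.

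There is essentially no obstacle here; all the work is hidden in the cited theorem. Alternatively, one could phrase the proof so as to reuse material already established: by Lemma \ref{8} the identity operator $I$ on $E$ is unbounded continuous, and then, writing $T=T\circ I$ with $T:E\to X$ continuous, Proposition \ref{7}(iii) immediately gives that $T$ is unbounded continuous. Either route is short, so I would present whichever reads more cleanly — most likely the direct one via \cite[Theorem 7]{Z}.
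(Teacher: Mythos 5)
Your proof is correct and follows exactly the route the paper intends: the paper gives no written proof, merely noting the lemma is ``an easy consequence of \cite[Theorem 7]{Z}'', and your argument (bounded $uaw$-null implies weakly null when $E'$ is order continuous, then weak-to-weak continuity of $T$) is precisely that consequence spelled out. The alternative factorization $T=T\circ I$ via Lemma \ref{8} and Proposition \ref{7}(iii) is also valid, but the direct version is the one matching the paper.
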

\begin{theorem}\label{402}
For a Banach lattice $E$, the following are equivalent.
\begin{itemize}
		\item[\em (i)] {$E'$ is order continuous}.
		\item[\em (ii)] {For any Banach space $X$, every continuous operator $T:E\to X$ is unbounded continuous}.
		\end{itemize}
\end{theorem}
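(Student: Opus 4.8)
The plan is to observe that this theorem is essentially a repackaging of Lemma \ref{300} and Lemma \ref{8}, so the proof should be very short. The implication (i) $\Rightarrow$ (ii) is precisely the content of Lemma \ref{300}: if $E'$ is order continuous, then for every Banach space $X$ every continuous operator $T:E\to X$ is unbounded continuous. So nothing new is needed there; I would simply cite Lemma \ref{300}.

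For the converse (ii) $\Rightarrow$ (i), the key idea is to specialize the hypothesis to a single well-chosen operator. Taking $X = E$ and $T = I$, the identity operator on $E$, condition (ii) forces $I$ to be unbounded continuous. Then Lemma \ref{8}, which states that $E'$ is order continuous exactly when the identity $I$ on $E$ is unbounded continuous, immediately yields that $E'$ is order continuous. This closes the equivalence.

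There is no real obstacle here; the only thing to be slightly careful about is that Lemma \ref{300} is stated for an arbitrary Banach space $X$ as codomain (which is what (ii) asks for), and that Lemma \ref{8} applies with the codomain being $E$ itself, which is a legitimate choice of $X$ in (ii). Both are immediate. So I would present the proof as: ``(i) $\Rightarrow$ (ii) is Lemma \ref{300}. For (ii) $\Rightarrow$ (i), apply (ii) with $X=E$ and $T=I$; then $I$ is unbounded continuous, so by Lemma \ref{8} the dual $E'$ is order continuous.''
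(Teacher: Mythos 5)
Your proposal is correct and matches the paper's own proof exactly: the forward implication is cited from Lemma \ref{300}, and the converse specializes to $X=E$, $T=I$ and invokes Lemma \ref{8}. Nothing further is needed.
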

\begin{proof}
$(i)\Rightarrow(ii)$. This is a consequence of Lemma \ref{300}.

$(ii)\Rightarrow(i)$. Take $X=E$ and $T=I$; the identity operator on $E$; by the assumption, $I$ is unbounded continuous and then use Lemma \ref{8}.
\end{proof}
Now, we can characterize reflexive Banach lattices in terms of unbounded continuous operators.
\begin{corollary}\label{401}
For a $KB$-space $E$, the following are equivalent.
\begin{itemize}
		\item[\em (i)] {$E$ is reflexive}.
		\item[\em (ii)] {For any Banach space $X$, every continuous operator $T:E\to X$ is unbounded continuous}.
		\end{itemize}
\end{corollary}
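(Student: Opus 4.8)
The plan is to read this off from Theorem~\ref{402} essentially for free, using only the lattice-theoretic fact that for a $KB$-space reflexivity coincides with order continuity of the dual --- this is exactly the bridge that makes Corollary~\ref{133} ``immediate''. Indeed, Theorem~\ref{402} already identifies condition~(ii) with the statement ``$E'$ is order continuous'', so the corollary amounts to inserting the $KB$-space hypothesis into that equivalence.

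I would argue the two implications separately. For $(i)\Rightarrow(ii)$: if $E$ is reflexive then $E'$ is reflexive, hence $E'$ has order continuous norm, and now Theorem~\ref{402} (or directly Lemma~\ref{300}) yields that every continuous operator $T:E\to X$ into an arbitrary Banach space $X$ is unbounded continuous. For $(ii)\Rightarrow(i)$: apply~(ii) to the continuous identity operator $I:E\to E$ to see that $I$ is unbounded continuous; since $E$ is a $KB$-space, Corollary~\ref{133} (equivalently, Lemma~\ref{8} followed by the fact below) gives that $E$ is reflexive.

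The only point that is not pure bookkeeping is the step ``$E$ a $KB$-space and $E'$ order continuous $\Rightarrow$ $E$ reflexive'', already needed for Corollary~\ref{133}. I would justify it by recalling that an order continuous \emph{dual} Banach lattice is automatically a $KB$-space: a norm-bounded increasing net in $(E')_{+}$ has a weak$^{*}$ cluster point which is its supremum (the positive cone of $E'$ is weak$^{*}$-closed), and order continuity upgrades the resulting increasing convergence to norm convergence, so $E'$ is a $KB$-space; since a Banach lattice is reflexive exactly when it and its dual are both $KB$-spaces (\cite{AB}), $E$ is reflexive. This is the step I expect to require the most care, since it genuinely uses that $E'$ is a dual lattice --- order continuity alone does not suffice, as $c_{0}$ shows --- but it is standard, so no real obstacle remains.
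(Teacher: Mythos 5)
Your proposal is correct and follows exactly the route the paper intends: the corollary is stated without proof as an immediate consequence of Theorem~\ref{402} together with the standard fact that a $KB$-space is reflexive precisely when its dual has order continuous norm (the same bridge underlying Corollary~\ref{133}). Your added justification of that bridge (an order continuous dual lattice is a $KB$-space, and reflexivity is equivalent to $E$ and $E'$ both being $KB$-spaces) is accurate and simply fills in details the paper leaves implicit.
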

Suppose $E$ and $F$ are Banach lattice. The class of all unbounded continuous operators from $E$ into $F$ is denoted by $B_{uc}(E,F)$, the class of all $uaw$-continuous operators between them will get the terminology $B_{uaw}(E,F)$. In this step, we consider some closedness properties for these classes of continuous operators.
\begin{proposition}
Suppose $E$ and $F$ are Banach lattice. Then $B_{uc}(E,F)$ is closed as a subspace of the space of all continuous operators from $E$ into $F$.
\end{proposition}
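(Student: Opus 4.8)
The plan is to show that if a sequence $(T_k)$ in $B_{uc}(E,F)$ converges in operator norm to some continuous operator $T$, then $T$ itself is unbounded continuous; since $B_{uc}(E,F)$ is clearly a linear subspace, this suffices. So fix a norm bounded $uaw$-null sequence $(x_n)$ in $E$, say $\norm{x_n}\le M$ for all $n$, and fix $\varphi\in F'$ with $\norm{\varphi}\le 1$; I must show $\varphi(T(x_n))\to 0$.

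The key step is a standard $\varepsilon/3$ argument. Given $\varepsilon>0$, choose $k$ with $\norm{T-T_k}<\varepsilon/(3M)$. Then for every $n$,
\[
\abs{\varphi(T(x_n))}\le\abs{\varphi((T-T_k)(x_n))}+\abs{\varphi(T_k(x_n))}\le\norm{T-T_k}\,\norm{x_n}+\abs{\varphi(T_k(x_n))}<\frac{\varepsilon}{3}+\abs{\varphi(T_k(x_n))}.
\]
Since $T_k\in B_{uc}(E,F)$ and $(x_n)$ is a bounded $uaw$-null sequence, $T_k(x_n)\xrightarrow{w}0$, so $\varphi(T_k(x_n))\to 0$; hence there is $N$ with $\abs{\varphi(T_k(x_n))}<\varepsilon/3$ for all $n\ge N$. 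Combining, $\abs{\varphi(T(x_n))}<\varepsilon$ for $n\ge N$. As $\varphi$ was an arbitrary norm-one functional and $\varepsilon>0$ arbitrary, $T(x_n)\xrightarrow{w}0$, i.e. $T$ is unbounded continuous.

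There is essentially no obstacle here: the only point requiring a moment's care is that the estimate must be uniform in $n$, which is exactly why one uses the uniform bound $\norm{x_n}\le M$ to control the first term before passing to the limit in the second. One should also remark that $B_{uc}(E,F)$ is a subspace because a linear combination of weakly null sequences is weakly null, so that sums and scalar multiples of unbounded continuous operators are again unbounded continuous; this is immediate from the definition and can be stated in one line. The same proof, verbatim, would also show that $B_{uaw}(E,F)$ is norm closed, replacing weak convergence of $(T_k(x_n))$ by $uaw$-convergence and using that the $uaw$-limit is likewise stable under the uniform estimate; but the statement as posed only asks for $B_{uc}(E,F)$.
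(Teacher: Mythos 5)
Your proof is correct and follows essentially the same route as the paper's: approximate $T$ in operator norm by some $T_k\in B_{uc}(E,F)$, use the uniform bound on $(x_n)$ to control $\varphi((T-T_k)(x_n))$, and let the weak nullity of $(T_k(x_n))$ handle the remaining term. If anything, your version is slightly more careful than the paper's, since you track the bound $M$ on $\norm{x_n}$ and normalize the functional explicitly.
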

\begin{proof}
Suppose $(T_m)$ is a sequence of unbounded continuous operators which is convergent to the operator $T$. We need to show that $T$ is unbounded continuous. For any $\varepsilon>0$, there is an $m_0$ such that $\|T_{m_0}-T\|<\frac{\varepsilon}{2}$. So, for each $x$ with $\|x\|\leq 1$, $\|T_{m_0}(x)-T(x)\|<\frac{\varepsilon}{2}$. Assume that $(x_n)$ is a norm bounded $uaw$-null sequence in $E$. This means that $\|T_{m_0}(x_n)-T(x_n)\|<\frac{\varepsilon}{2}$. Note that $T_{m_0}(x_n)\xrightarrow{w}0$ so that for a fixed $f\in F^{'}_{+}$ and sufficiently large $n$, $ f(T_{m_0}(x_n))<\frac{\varepsilon}{2}$ so that $ f(T(x_n))<\varepsilon$.
\end{proof}
\begin{proposition}
Suppose $E$ and $F$ are Banach lattice. Then $B_{uaw}(E,F)$ is closed as a subspace of the space of all continuous operators from $E$ into $F$.
\end{proposition}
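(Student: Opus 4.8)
The plan is to repeat, \emph{mutatis mutandis}, the argument of the preceding proposition, the only extra ingredient being an elementary lattice inequality that lets one replace $|T(x_n)|\wedge u$ by $|T_{m_0}(x_n)|\wedge u$ up to a norm error. Let $(T_m)$ be a sequence in $B_{uaw}(E,F)$ converging in the operator norm to a continuous operator $T$; I must show $T$ is $uaw$-continuous. Fix a norm bounded $uaw$-null sequence $(x_n)$ in $E$, say $\|x_n\|\le M$ for all $n$, and fix $u\in F_{+}$ and $f\in F'_{+}$; it suffices to prove $f(|T(x_n)|\wedge u)\to 0$, since then, splitting an arbitrary functional into positive and negative parts, the positive sequence $(|T(x_n)|\wedge u)$ is weakly null, and as $u$ was arbitrary $T(x_n)\xrightarrow{uaw}0$.

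For the estimate I would first record that for $a,b\in F$ and $u\in F_{+}$ one has $|a|\wedge u\le |a-b|+|b|\wedge u$; indeed $|a|\le|a-b|+|b|$, and for positive $p,q,u$ the inequality $(p+q)\wedge u\le p+q\wedge u$ follows from $(p+q)\wedge u-p=q\wedge(u-p)\le q\wedge u$. Applying this with $a=T(x_n)$, $b=T_{m_0}(x_n)$ and then the positive functional $f$ gives
\[ f\bigl(|T(x_n)|\wedge u\bigr)\le \|f\|\,\|T-T_{m_0}\|\,M+f\bigl(|T_{m_0}(x_n)|\wedge u\bigr). \]
Given $\varepsilon>0$, choose $m_0$ with $\|T-T_{m_0}\|<\varepsilon/(2M\|f\|+1)$, so the first term is at most $\varepsilon/2$; since $T_{m_0}\in B_{uaw}(E,F)$ we have $T_{m_0}(x_n)\xrightarrow{uaw}0$, hence $|T_{m_0}(x_n)|\wedge u\xrightarrow{w}0$ and the second term is below $\varepsilon/2$ for all large $n$. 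Thus $f(|T(x_n)|\wedge u)<\varepsilon$ eventually, which is what was needed.

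I do not anticipate any genuine difficulty: this is the $uaw$-analogue of the previous closedness proof, and the only point that requires a moment's care is that one cannot apply functionals directly to $T(x_n)$ but must first pass through the lattice operation $|\cdot|\wedge u$, which the displayed inequality handles; reducing weak nullity of the positive sequence $(|T(x_n)|\wedge u)$ to testing against positive functionals is then routine.
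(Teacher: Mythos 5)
Your proof is correct and follows essentially the same route as the paper's: approximate $T$ by some $T_{m_0}$ in operator norm, transfer the estimate on $|T(x_n)|\wedge u$ to $|T_{m_0}(x_n)|\wedge u$ via an elementary lattice inequality, and finish with an $\varepsilon/2$ argument against positive functionals. The only difference is cosmetic — the paper uses $|a\wedge c-b\wedge c|\le|a-b|\wedge c$ where you use $|a|\wedge u\le|a-b|+|b|\wedge u$ — and your version actually tracks the constants $\|f\|$ and $M$ more carefully than the paper does.
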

\begin{proof}
Suppose $(T_m)$ is a sequence of $uaw$-continuous operators which is convergent to the operator $T$. We need to show that $T$ is $uaw$-continuous. For any $\varepsilon>0$, there is an $m_0$ such that $\|T_{m_0}-T\|<\frac{\varepsilon}{2}$. So, for each $x$ with $\|x\|\leq 1$, $\|T_{m_0}(x)-T(x)\|<\frac{\varepsilon}{2}$. Assume that $(x_n)$ is a norm bounded $uaw$-null sequence in $E$. This means that $\|T_{m_0}(x_n)-T(x_n)\|<\frac{\varepsilon}{2}$. Note that $T_{m_0}(x_n)\xrightarrow{uaw}0$. Fix $f\in F^{'}_{+}$ and $u\in F_{+}$. Observe that for $a,b,c\geq 0$ in an Archimedean vector lattice, we have $|a\wedge c-b\wedge c|\leq |a-b|\wedge c$. Therefore,
\[f(|T_{m_0}(x_n)|\wedge u)-f(|T(x_n)|\wedge u)\leq \||(T_{m_0}((x_n))|\wedge u)-(|T(x_n)|\wedge u)\|\]
\[\leq \|||T_{m_0}(x_n)|-|T(x_n)||\wedge u\|\leq \|||T_{m_0}(x_n)|-|T(x_n)||\|\leq \|T_{m_0}(x_n)-T(x_n)\|<\frac{\varepsilon}{2}.\]

 On the other hand, for sufficiently large $n$, $ f(T_{m_0}(x_n)\wedge u)\rightarrow 0$. This, in turn, results in $f(T(x_n)\wedge u)\rightarrow 0$.
\end{proof}
\begin{remark}
Neither $B_{uc}(E,F)$ nor $B_{uaw}(E,F)$ are order closed, in general. Consider operator $S:\ell_1 \to L_2[0,1]$ defined via $S(\alpha_1,\alpha_2,\ldots)=\Sigma_{n=1}^{\infty}\alpha_n {r_n}^{+}$. In which $(r_n)$ denotes the sequence of Rademacher functions; for more details about this operator consider \cite[Example 5.17, page 284]{AB}. Note that $S$ is not unbounded continuous; the standard basis $(e_n)$ is $uaw$-null in $\ell_1$ but $S(e_n)={r_n}^{+}$ which is not weakly null since $\int_{0}^{1}{r_n}^{+}(t)dt =\frac{1}{2}$. Put $S_n=SP_n$, in which $P_n$ is the canonical projection on $\ell_1$. Observe that each $S_n$ is finite rank. So, in the range space, $uaw$-convergence, weak convergence and norm one agree.
Therefore, we conclude that each $S_n$ is both $uaw$-continuous and unbounded continuous. Moreover $S_n\uparrow S$ but $S$ is neither unbounded continuous nor $uaw$-continuous.
\end{remark}
\begin{remark}
Suppose $E$ and $F$ are Banach lattices and $T,S:E\to F$ are continuous operators such that $0\leq S\leq T$. It can be easily verified that if $T$ is either unbounded continuous or $uaw$-continuous, then so is $S$.
\end{remark}
In this part, we extend Theorem \ref{402} while $(ii)\Rightarrow (i)$ is improved.
\begin{theorem}\label{2021}
Suppose $E$ is a Banach lattice. For every operator $T:E\to \ell_{\infty}$, the following assertions are equivalent.
\begin{itemize}
		\item[\em (i)] {$E'$ is order continuous}.
	\item[\em (ii)] {Every continuous operator $T:E\to \ell_{\infty}$ is unbounded continuous}.
		\end{itemize}
\end{theorem}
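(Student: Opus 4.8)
The plan is to mirror the structure of Theorem \ref{402}: the implication $(i)\Rightarrow(ii)$ is already contained in Lemma \ref{300} (with $X=\ell_\infty$), so the only work is the reverse implication $(ii)\Rightarrow(i)$. The point of the strengthening over Theorem \ref{402} is that we are no longer allowed to test against the identity operator on $E$; instead we must extract order continuity of $E'$ from the single hypothesis that \emph{every} continuous operator into the fixed space $\ell_\infty$ is unbounded continuous. So I would argue by contraposition: assume $E'$ is \emph{not} order continuous and build a continuous operator $T:E\to\ell_\infty$ that fails to be unbounded continuous.

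The key step is the following dichotomy, standard in the theory of order continuous duals: $E'$ fails to be order continuous if and only if $E$ contains a lattice copy of $\ell_1$, equivalently (by Lozanovsky-type results, cf. \cite{AB}) there is a norm-bounded disjoint positive sequence $(x_n)$ in $E$ and a positive functional $f\in E'_+$ with $f(x_n)\ge\varepsilon>0$ for all $n$. Since $(x_n)$ is bounded and disjoint, it is $uaw$-null in $E$ (a bounded disjoint sequence is always $uaw$-null — this is one of the basic facts from \cite{Z}). Now define $T:E\to\ell_\infty$ by $T(x)=\bigl(f_1(x),f_2(x),\dots\bigr)$ for a suitable bounded sequence $(f_n)\subseteq E'$; the natural choice is to take $f_n$ supported appropriately so that the $n$-th coordinate of $T(x_n)$ stays bounded away from $0$ while keeping $\sup_n\|f_n\|<\infty$ so that $T$ is bounded into $\ell_\infty$. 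Concretely, one can normalize so that $f_n(x_n)\ge\varepsilon$; taking $f_n=f$ for all $n$ already gives a bounded operator $T:E\to\ell_\infty$, $T(x)=(f(x),f(x),\dots)$, with $T(x_n)=(\varepsilon_n,\varepsilon_n,\dots)$ where $\varepsilon_n=f(x_n)\ge\varepsilon$, and this sequence is not weakly null in $\ell_\infty$ since applying the coordinate functional $e_1'$ gives $e_1'(T(x_n))=f(x_n)\not\to 0$. Hence $T$ is not unbounded continuous, contradicting $(ii)$.

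One subtlety to check carefully: $\ell_\infty$ is a perfectly ordinary $AM$-space with unit, so there is no difficulty in $T$ landing there, and the coordinate functionals belong to $\ell_\infty'$, which is all the dual-side input we need to detect the failure of weak nullity. The main obstacle — really the only nonroutine point — is invoking the right characterization of non-order-continuity of $E'$ to produce the bounded disjoint sequence $(x_n)$ together with a positive functional bounded away from zero on it; everything after that is a two-line verification. I would cite the relevant statement from \cite{AB} (the characterization of order continuity of $E'$ via the absence of lattice-isomorphic copies of $\ell_1$, or equivalently via disjoint sequences) and the fact that bounded disjoint sequences are $uaw$-null from \cite{Z}, and then close the contrapositive.
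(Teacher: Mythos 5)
Your argument is correct, and the reverse implication is genuinely simpler than the one in the paper. Both proofs start from the same place: \cite[Theorem 4.69]{AB} produces a norm bounded positive disjoint sequence $(x_n)$ that is not weakly null, hence $uaw$-null by \cite[Lemma 2]{Z}. They diverge in how the bad operator into $\ell_\infty$ is built. The paper invokes \cite[Lemma 3.4]{AEH} to obtain a bounded disjoint sequence $(g_n)\subseteq E'$ biorthogonal to $(x_n)$, defines $T(x)=(g_1(x),(g_1+g_2)(x),\dots)$, computes $T(x_n)=(0,\dots,0,1,1,\dots)$, and then needs Dini's theorem to rule out weak convergence to zero. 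You instead pass to a subsequence and replace the witnessing functional by its modulus to get $f\in E'_+$ with $f(x_n)\ge\varepsilon$, and use the rank-one operator $T(x)=(f(x),f(x),\dots)$; a single coordinate functional of $\ell_\infty$ then defeats weak nullity. Your route avoids both the AEH lemma and Dini, and it in fact proves more: since your $T$ factors through the span of a single nonzero vector, the identical argument shows that for \emph{any} nonzero Banach space $Y$, the hypothesis that every continuous operator $E\to Y$ is unbounded continuous already forces every $f\in E'$ to annihilate bounded $uaw$-null sequences, so the identity on $E$ is unbounded continuous and Lemma \ref{8} gives order continuity of $E'$ --- the specific choice of $\ell_\infty$ plays no role. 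The only point you should state explicitly rather than gesture at is the passage from ``$(x_n)$ is not weakly null'' to ``$f(x_{n_k})\ge\varepsilon$ for some positive $f$ and some subsequence'': choose $h\in E'$ and a subsequence with $|h(x_{n_k})|\ge\varepsilon$, and set $f=|h|$, using $|h(x)|\le |h|(x)$ for $x\ge 0$. With that line added, the proof is complete.
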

\begin{proof}
$(i)\Rightarrow (ii)$. It is a direct consequence of Theorem \ref{402}.

$(ii)\Rightarrow (i)$. Suppose not. So, by \cite[Theorem 4.69]{AB} there exists a positive bounded disjoint sequence $(x_n)\subseteq E$ which is not weakly null.
By \cite[Lemma 3.4]{AEH}, there exists a bounded positive disjoint sequence $(g_n)$ in $E'$  such that $g_n(x_n)=1$ and $g_n(x_m)=0$ for all $m \neq n$. Define the operator $T:E\to \ell_{\infty}$ by $T(x)=(g_1(x),(g_1+g_2)(x),\ldots)$. $T$ is well-defined because the sequence $(g_i)$ is disjoint and bounded. Observe that $x_n\xrightarrow{uaw}0$ by \cite[Lemma 2]{Z} but $T(x_n)=(0,\ldots,0,1,1,\ldots)$ in which the zero is appeared $n$-times. By using Dini's theorem (\cite[Theorem 3.52]{AB}), $T(x_n)\nrightarrow 0$ weakly, a contradiction.
\end{proof}
Moreover, by considering Corollary \ref{134}, we obtain the following.
\begin{corollary}
Suppose $E$ is a Banach lattice. For every operator $T:E\to \ell_{\infty}$, the following assertions are equivalent.
\begin{itemize}
		\item[\em (i)] {$E'$ is order continuous}.
	\item[\em (ii)] {Every continuous operator $T:E\to \ell_{\infty}$ is $uaw$-continuous}.
		\end{itemize}
\end{corollary}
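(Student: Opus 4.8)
The plan is to derive this corollary directly by combining Theorem \ref{2021} with Corollary \ref{134}, the only genuinely new observation being that $\ell_\infty$ is an $AM$-space, so that unbounded continuity and $uaw$-continuity coincide for operators into it.

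For $(i)\Rightarrow(ii)$, I would assume $E'$ is order continuous. By Theorem \ref{2021}, every continuous operator $T:E\to\ell_\infty$ is unbounded continuous. Since $\ell_\infty$ is an $AM$-space (indeed with order unit), Corollary \ref{134} applies and gives that each such $T$ is $uaw$-continuous.

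For $(ii)\Rightarrow(i)$, I would run the same equivalence in reverse: assuming every continuous $T:E\to\ell_\infty$ is $uaw$-continuous, Corollary \ref{134} (again using that $\ell_\infty$ is an $AM$-space) forces every such $T$ to be unbounded continuous, whence Theorem \ref{2021} yields that $E'$ is order continuous.

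There is essentially no obstacle here: the content is already packaged in Theorem \ref{2021} and Corollary \ref{134}, and the proof amounts to chaining the two equivalences through the $AM$-space property of $\ell_\infty$. The only point one must be slightly careful about is that Corollary \ref{134} requires the \emph{codomain} to be an $AM$-space, which is exactly the situation here, so the bi-implication between unbounded continuity and $uaw$-continuity is available for all the operators under consideration.
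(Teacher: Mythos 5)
Your proposal is correct and matches the paper's intent exactly: the paper derives this corollary by the single remark ``by considering Corollary \ref{134}, we obtain the following,'' i.e.\ by combining Theorem \ref{2021} with the equivalence of unbounded continuity and $uaw$-continuity for operators into the $AM$-space $\ell_\infty$. Your write-up just spells out the two directions of that chaining explicitly.
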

In the following, we prove one of the main results in \cite{EGZ} under less hypotheses; more precisely, we show that $uaw$-Dunford-Pettis operators and $M$-weakly compact operators are in fact the same.

Recall that an operator $T:E\to X$, where $E$ is a Banach lattice and $X$ is a Banach space, is called $M$-weakly compact if for every norm bounded disjoint sequence $(x_n)$ in $E$, $\|T(x_n)\|\rightarrow 0$. Moreover, $T$ is said to be $o$-weakly compact if $T[0,x]$ is a weakly relatively compact set for every $x\in E_{+}$. Also, note that a continuous operator $T:X\to E$ is said to be $L$-weakly compact if every disjoint sequence in the solid hull of $T(B_X)$ is norm null.
\begin{theorem}\label{400}
Suppose $E$ is a Banach lattice and $X$ is a Banach space. If $T:E\to X$ is an $M$-weakly compact operator, then it is $uaw$-Dunford-Pettis.
\end{theorem}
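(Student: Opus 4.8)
The plan is to argue by contradiction and reduce the statement to the behaviour of $T$ on a disjoint sequence, where the very definition of $M$-weak compactness can be applied. The key input is a disjointification principle for bounded $uaw$-null sequences: \emph{a norm bounded $uaw$-null sequence which is not norm null has a subsequence $(x_{n_k})$ for which there is a disjoint sequence $(y_k)$ in $E$ with $\|x_{n_k}-y_k\|\to 0$.} This is in the spirit of the basic facts about unbounded convergences (compare \cite{DOT, Z}); I indicate below how to get it.

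Granting this, the argument is short. First reduce to positive sequences: if $(x_n)$ is norm bounded and $uaw$-null, then so are $(x_n^{+})$ and $(x_n^{-})$, since $0\le|x_n^{\pm}|\wedge u\le|x_n|\wedge u\xrightarrow{w}0$ and a positive sequence dominated by a weakly null positive sequence is weakly null (test against $f=f^{+}-f^{-}$, using that $E'$ is a Banach lattice); as $\|Tx_n\|\le\|Tx_n^{+}\|+\|Tx_n^{-}\|$, it suffices to handle a norm bounded positive $uaw$-null sequence. So let $(x_n)\subseteq E_{+}$ be norm bounded and $uaw$-null, and suppose towards a contradiction that $\|Tx_n\|\ge\delta>0$ along a subsequence. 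Then $\|x_n\|\ge\delta/\|T\|>0$ along that subsequence, so the disjointification principle gives a further subsequence $(x_{n_k})$ and a disjoint (hence norm bounded) sequence $(y_k)$ with $\|x_{n_k}-y_k\|\to 0$. Since $T$ is $M$-weakly compact, $\|Ty_k\|\to 0$, whence $\|Tx_{n_k}\|\le\|Ty_k\|+\|T\|\,\|x_{n_k}-y_k\|\to 0$, contradicting $\|Tx_{n_k}\|\ge\delta$.

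To produce the disjointification principle one can, after normalizing and invoking the characterization of $M$-weak compactness that for each $\varepsilon>0$ there is $u\in E_{+}$ with $\|T((x-u)^{+})\|\le\varepsilon$ whenever $\|x\|\le1$ (\cite[Theorem 5.60]{AB}), replace $x_n$ by the order bounded sequence $w_n:=x_n\wedge u$; then $\inf_n\|Tw_n\|>0$ still holds, $(w_n)$ is still $uaw$-null, and $w_n\xrightarrow{w}0$ in $E$. One now works in the principal ideal $E_u$, an $AM$-space lattice isometric to some $C(K)$: every disjoint norm bounded sequence of $E_u$ is such a sequence of $E$, so $T|_{E_u}\colon E_u\to X$ is again $M$-weakly compact, hence weakly compact, hence — by the Dunford–Pettis property of $C(K)$ — completely continuous. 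A bounded sequence in $C(K)$ that is not eventually norm small has, after passing to a subsequence, either a weakly null subsequence or a subsequence which is a norm small perturbation of a disjoint sequence; in the first case the weak limit is forced to be $0$, because the weak topology of $E_u$ is finer than the one induced by $\sigma(E,E')$ and $w_n\xrightarrow{w}0$ in $E$, so complete continuity of $T|_{E_u}$ yields $\|Tw_n\|\to 0$; in the second case one gets a norm bounded disjoint sequence in $E$ close to a subsequence of $(w_n)$, and $M$-weak compactness of $T$ again yields a contradiction.

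The real obstacle is exactly this disjointification step: one must exclude that $(w_n)$, measured in the order-unit norm of $E_u$, behaves like an $\ell_1$ basis in a way that no small perturbation makes disjoint, and this is precisely where $uaw$-nullity of $(x_n)$ in $E$ — rather than mere order boundedness in $E_u$ — has to be used. Everything else (the reduction to positive sequences, the truncation via the $\varepsilon$-$u$ characterization, and the identification of $E_u$ with a $C(K)$) is routine once the classical facts are in hand.
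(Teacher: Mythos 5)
Your reduction to positive sequences and your truncation $w_n=x_n\wedge u$ via the approximation property \cite[Theorem 5.60]{AB} coincide with the first half of the paper's argument, but the way you dispose of the truncated part contains a genuine gap. The dichotomy you invoke --- that a bounded sequence in $C(K)$ which is not eventually norm small has, after passing to a subsequence, either a weakly null subsequence or a small norm perturbation of a disjoint sequence --- is false as stated (a constant nonzero sequence already violates both alternatives, and replacing ``weakly null'' by ``weakly convergent'' does not repair it: consider $\sin(2\pi nt)$ in $C[0,1]$). The global ``disjointification principle'' announced at the outset is also false for general Banach lattices: it does hold when $E$ is order continuous (this is exactly how \cite[Theorem 4]{Z} and \cite[Theorem 3.2]{DOT} are combined in the proof of Theorem \ref{4}), but in $\ell_\infty$ one can index $\mathbb N$ by unordered pairs $\{i,j\}$ and take $A_n$ to be the set of pairs containing $n$; the sequence $\chi_{A_n}$ is positive, of norm one, and $uaw$-null, yet any two of the sets meet, so no subsequence can be a small norm perturbation of a disjoint sequence. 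You yourself flag the disjointification of $(w_n)$ as ``the real obstacle'' and leave it open, so the proof is incomplete precisely at its essential step.

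The paper closes this step without any disjointification or $C(K)$ representation: an $M$-weakly compact operator is $o$-weakly compact by \cite[Theorem 5.57]{AB}, and an $o$-weakly compact operator sends order bounded positive sequences that are weakly null \emph{in $E$} to norm null sequences (\cite[Exercise 3, p.~336]{AB}). Since $0\le x_n\wedge u\le u$ and $x_n\wedge u\xrightarrow{w}0$ by $uaw$-nullity, this yields $\|T(x_n\wedge u)\|\to 0$ directly, and combining with the uniform bound $\|T((x_n-u)^{+})\|<\varepsilon$ from \cite[Theorem 5.60]{AB} finishes the proof. Note that this formulation also sidesteps the issue, implicit in your case analysis, of passing between the weak topology of $E_u$ and the topology induced by $\sigma(E,E')$ on the order interval $[0,u]$.
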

\begin{proof}
Suppose $(x_n)$ is a positive norm bounded $uaw$-null sequence in $E$. This means that $x_n\wedge u\xrightarrow{w}0$ for any positive $u\in E$. Observe that $T$ is also $o$-weakly compact by \cite[Theorem 5.57]{AB}. So, using \cite[Exercise 3, Page 336]{AB}, convinces us that $\|T(x_n\wedge u)\|\rightarrow 0$. Note that by \cite[Theorem 5.60]{AB}, the proof would be complete.
\end{proof}
By using Theorem \ref{400} and \cite[Proposition 2.6]{EGZ}, we conclude that the notions of an $M$-weakly compact operator and a $uaw$-Dunford-Pettis operator agree. Therefore, many results regarding $uaw$-Dunford-Pettis operators mentioned in \cite{EGZ} can be restated directly for $M$-weakly compact operators.
Finally, we shall show that unbounded continuous operators are very closed to $M$-weakly compact operators and $uaw$-continuous operators are related to $L$-weakly compact operators. Before we proceed, we consider the following definition.

Suppose $E$ is a Banach lattice and $X$ is a Banach space. A continuous operator $T:E\to X$ is said to be  $WM$-weakly compact if for every bounded disjoint sequence $(x_n)\subseteq E$, $T(x_n)\xrightarrow{w}0$. A continuous operator $T:X\to E$ is called a $WL$-weakly compact operator if $y_n\xrightarrow{w}0$ for each disjoint sequence $(y_n)$ in the solid hull of $T(B_X)$. It is easy to see that every $M$-weakly compact operator is $WM$-weakly compact and every $L$-weakly compact operator is a $WL$-weakly compact operator. Nevertheless, the inclusion map from $c_0$ into $\ell_{\infty}$ is both $WM$-weakly compact  and $WL$-weakly compact, although, it is neither an $M$-weakly compact operator nor an $L$-weakly compact operator.

These classes of continuous operators enjoy an approximation property similar to \cite[Theroem 5.60]{AB}.
\begin{lemma}\label{500}
For Banach lattices $E$ and $F$, and a Banach space $X$, the following assertions hold.
\begin{itemize}
\item[\em (i)] {If $T:E\to F$ is a $WM$-weakly compact operator, then for each $\varepsilon>0$ and for each $f\in {F'}_{+}$, there exists some $u\in E_{+}$ such that $f(T(|x|-u)^{+})<\varepsilon$ satisfies for all $x\in E$ with $\|x\|\leq 1$}.
		\item[\em (ii)] {If $T:X\to E$ is a $WL$-weakly compact operator, then for each $\varepsilon>0$ and for each $f\in {E'}_{+}$, there exists some $u\in E_{+}$ such that $f((|Tx|-u)^{+})<\varepsilon$ satisfies for all $x\in X$ with $\|x\|\leq 1$}.
\end{itemize}
\end{lemma}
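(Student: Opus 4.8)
The plan is to reduce part (i) to the classical statement for ordinary $M$-weakly compact operators via the adjoint, and to treat part (ii) by imitating the proof of \cite[Theorem 5.60]{AB} with the norm replaced by a suitable lattice seminorm. For (i), put $g:=T'f\in E'$, so that $g(x)=f(Tx)$ for all $x\in E$; I would first show that the positive functional $|g|$, viewed as an operator $E\to\mathbb R$, is $M$-weakly compact in the usual sense. Indeed, given a norm-bounded disjoint sequence $(x_n)$ in $E$, choose for each $n$, using the Riesz--Kantorovich formula for $|g|$, a vector $y_n$ with $|y_n|\le|x_n|$ and $g(y_n)>|g|(|x_n|)-\tfrac1n$. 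Then $(y_n)$ is again norm-bounded and disjoint, so $WM$-weak compactness of $T$ gives $T(y_n)\xrightarrow{w}0$, whence $0\le|g|(|x_n|)<g(y_n)+\tfrac1n=f(Ty_n)+\tfrac1n\to0$; since $\bigl|\,|g|(x_n)\,\bigr|\le|g|(|x_n|)$ we get $|g|(x_n)\to0$. Applying \cite[Theorem 5.60]{AB} to the operator $|g|:E\to\mathbb R$ then yields, for each $\varepsilon>0$, some $u\in E_+$ with $|g|\bigl((|x|-u)^+\bigr)<\varepsilon$ for all $\|x\|\le1$, and since $(|x|-u)^+\ge0$ and $|g|\ge0$ this gives $f\bigl(T((|x|-u)^+)\bigr)=g\bigl((|x|-u)^+\bigr)\le|g|\bigl((|x|-u)^+\bigr)<\varepsilon$, which is (i).

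For (ii) there is no adjoint to exploit, so I would argue directly, replacing the norm throughout the proof of \cite[Theorem 5.60]{AB} by the norm-continuous lattice seminorm $\rho_f(y):=f(|y|)\le\|f\|\,\|y\|$. Suppose the conclusion fails: there is $\varepsilon_0>0$ with the property that, for every $u\in E_+$, some $x_u\in B_X$ satisfies $f\bigl((|Tx_u|-u)^+\bigr)\ge\varepsilon_0$. Since $\mathrm{sol}(T(B_X))$ is solid, I would run the usual exhaustion: pick $x_1\in B_X$; given $x_1,\dots,x_n$, set $u_n:=k_n\bigl(|Tx_1|+\dots+|Tx_n|\bigr)$ for a large $k_n$ and pick $x_{n+1}\in B_X$ with $f\bigl((|Tx_{n+1}|-u_n)^+\bigr)\ge\varepsilon_0$; put $r_{n+1}:=(|Tx_{n+1}|-u_n)^+$, a positive element of $\mathrm{sol}(T(B_X))$ with $f(r_{n+1})\ge\varepsilon_0$. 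Using the elementary pointwise inequality $(p-kq)^+\wedge q\le p/k$ (for $p,q\ge0$, $k\ge1$) together with the distributive law $r\wedge(w_1\vee\dots\vee w_n)=(r\wedge w_1)\vee\dots\vee(r\wedge w_n)$, one sees that, by taking $k_n$ large enough, the $\rho_f$-overlap of $r_{n+1}$ with $|Tx_1|,\dots,|Tx_n|$ (and hence with $r_1,\dots,r_n$) can be made as small as we wish. The standard disjointification procedure, after passing to a subsequence along which these overlaps are summable, then produces a norm-bounded disjoint sequence $(z_n)$ of positive elements of $\mathrm{sol}(T(B_X))$ with $f(z_n)\ge\varepsilon_0/2$ for all $n$. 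But $(z_n)$, being a disjoint sequence in $\mathrm{sol}(T(B_X))$, is weakly null because $T$ is $WL$-weakly compact, so $f(z_n)\to0$; this contradiction proves (ii).

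The lattice-arithmetic steps and the invocation of \cite[Theorem 5.60]{AB} are routine; the point where I expect genuine care to be needed is the disjointification in (ii) --- upgrading a sequence with arbitrarily small pairwise $\rho_f$-overlaps to an honestly disjoint one on which $f$ stays bounded away from $0$. Since $f$ need not be order continuous one cannot simply pass to band components of the $r_n$ inside $E$; instead I would follow the construction used in the classical theory, which is valid in any Banach lattice and uses only finitely many lattice operations together with the triangle inequality for $\rho_f$ and a subsequence extraction, so that it transfers with $\rho_f$ in place of the norm. Equivalently, one may carry out the disjointification in the $AL$-space completion $L_f$ of $E/\ker\rho_f$, where band components exist and the norm is order continuous, and then transport the conclusion back to $E$ along the canonical lattice homomorphism $q:E\to L_f$ using density of $q(E_+)$ in $(L_f)_+$; this variant trades the disjointification for the need to compare disjoint sequences in $\mathrm{sol}_{L_f}(qT(B_X))$ with those in $\mathrm{sol}_E(T(B_X))$, which amounts to the same difficulty. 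This disjointification, rather than any of the estimates, is where I expect the real work to lie.
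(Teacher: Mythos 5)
Your argument is correct in substance, but it takes a different route from the paper, and in part (ii) a much longer one. The paper disposes of both parts with a single citation of \cite[Theorem 4.36]{AB}, the approximation theorem for a norm bounded solid set with respect to a norm continuous seminorm: for (i) it is applied on the unit ball of $E$ with the seminorm built from $f\circ T$ (which vanishes along bounded disjoint sequences precisely because $T$ is $WM$-weakly compact), and for (ii) on the solid hull $A$ of $T(B_X)$ with $\rho(y)=f(|y|)$ (which vanishes along disjoint sequences of $A$ because such sequences, together with their moduli, lie in $A$ and are weakly null). Your part (i) --- passing to $g=T'f$, using the Riesz--Kantorovich formula to produce $y_n$ with $|y_n|\le |x_n|$, hence disjoint and bounded, concluding that $|g|$ is $M$-weakly compact as a functional, and then invoking \cite[Theorem 5.60]{AB} together with $f(T((|x|-u)^+))=g((|x|-u)^+)\le |g|((|x|-u)^+)$ --- is a clean and fully valid alternative; it buys you a reduction to the classical $M$-weakly compact theory at the cost of working only for the fixed functional $f$, which is all the lemma asks. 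Your part (ii), by contrast, amounts to reproving \cite[Theorem 4.36]{AB} from scratch, and you explicitly leave the hard step (the disjointification with control of the overlaps measured by $\rho_f$) as a sketch. That step is not a mathematical obstruction --- it is exactly the content of the disjointification lemma and of \cite[Theorem 4.36]{AB}, which are proved there for an arbitrary norm continuous seminorm, so your concern about $f$ failing to be order continuous is already resolved in the source --- but as written your proof of (ii) is incomplete unless you either carry out that construction or simply cite the theorem, as the paper does.
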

\begin{proof}
$(i)$. Suppose $B$ is the closed unit ball of $E$ and $(x_n)$ is a disjoint sequence in $B$. Given $\varepsilon>0$ and $f\in {F'}_{+}$. By the assumption, $f(Tx_n)\rightarrow 0$. By \cite[Theorem 4.36]{AB} ( with $\rho(x)=f(|x|)$), there exists $u\in E_{+}$ with $f(T(|x|-u)^{+})<\varepsilon$ for all $x$ with $\|x\|\leq 1$.

$(ii)$. Suppose $U$ is the closed unit ball of $X$ and $A$ is the solid hull of $T(U)$. $T$ is $WL$-weakly compact so that every disjoint sequence in $A$ is weakly null. Consider the identity operator $I$ on $E$ and fix $f\in {E'}_{+}$. Again, by considering $\rho(x)=f(|x|)$ and using \cite[Theorem 4.36]{AB}, we
can find $u\in E_{+}$ such that $f(I(|y|-u)^{+})<\varepsilon$ for each $y\in A$ so that $f((|Tx|-u)^{+})<\varepsilon$ for all $x\in U$.
\end{proof}
Therefore, we have the following.

\begin{theorem}
Suppose $E$ and $F$ are Banach lattices. Then a continuous operator $T:E\to F$ is unbounded continuous if and only if it is a $WM$-weakly compact operator.
\end{theorem}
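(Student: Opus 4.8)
The plan is to establish both implications straight from the definitions: the forward one is immediate, and the reverse one rests on the approximation Lemma \ref{500}(i) together with \cite[Theorem 4.36]{AB}.

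For the forward implication, suppose $T$ is unbounded continuous and let $(x_n)$ be a norm bounded disjoint sequence in $E$. By \cite[Lemma 2]{Z} every norm bounded disjoint sequence is $uaw$-null, so $x_n\xrightarrow{uaw}0$, and hence $T(x_n)\xrightarrow{w}0$ by the definition of unbounded continuity. Thus $T$ is $WM$-weakly compact.

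For the reverse implication, assume $T$ is $WM$-weakly compact and let $(x_n)$ be a norm bounded $uaw$-null sequence in $E$; after rescaling we may assume $\|x_n\|\le 1$. Since $F'$ is a Banach lattice and every functional is a difference of positive ones, it suffices to show $f(T(x_n))\to 0$ for a fixed $f\in F'_{+}$. Put $h:=T'f\in E'$; then $|h(x)|\le |h|(|x|)$ for all $x$, with $|h|\in (E')_{+}$, and $(|x_n|)$ is again norm bounded and $uaw$-null, so it is enough to prove $|h|(|x_n|)\to 0$. The argument splits this, for a well-chosen $u\in E_{+}$, into a uniformly small piece and a "tail" piece that vanishes along the sequence. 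First I would record that $|h|(z_n)\to 0$ for every disjoint sequence $(z_n)$ in the positive part of the unit ball of $E$: choosing $y_n$ with $|y_n|\le z_n$ and $|h(y_n)|\ge |h|(z_n)-1/n$, the sequence $(y_n)$ is disjoint and norm bounded, so $T(y_n)\xrightarrow{w}0$ by $WM$-weak compactness, whence $|h|(z_n)-1/n\le |h(y_n)|=|f(T(y_n))|\to 0$. Consequently \cite[Theorem 4.36]{AB} applies to the positive functional $|h|$ (this is precisely the solid form of Lemma \ref{500}(i)) and yields, for each $\varepsilon>0$, some $u\in E_{+}$ with $|h|((z-u)^{+})<\varepsilon$ for all $z\in E_{+}$ with $\|z\|\le 1$. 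Writing $|x_n|=|x_n|\wedge u+(|x_n|-u)^{+}$ gives $|h|(|x_n|)<|h|(|x_n|\wedge u)+\varepsilon$; since $|x_n|\xrightarrow{uaw}0$ forces $|x_n|\wedge u\xrightarrow{w}0$ and $|h|$ is weakly continuous, the first term tends to $0$, so $\limsup_n|h|(|x_n|)\le\varepsilon$, and letting $\varepsilon\downarrow 0$ completes the proof.

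The main obstacle is that $T$ need not be positive, so one cannot compare $T(|x_n|\wedge u)$ with $T(|x_n|)$ directly; the remedy is to replace $f$ by the positive functional $|h|=|T'f|$ on $E$, equivalently to use the solid form of Lemma \ref{500}(i) (coming from \cite[Theorem 4.36]{AB} applied to the lattice seminorm $x\mapsto |h|(|x|)=\sup\{|f(Ty)|:|y|\le |x|\}$) rather than the literal one-sided inequality. Everything else — the rescaling, the reduction to positive $f$, and the weak continuity of a bounded functional — is routine.
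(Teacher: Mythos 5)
Your proof is correct and follows essentially the same route as the paper: the forward direction via the fact that norm bounded disjoint sequences are $uaw$-null (\cite[Lemma 2]{Z}), and the reverse direction via the disjoint-sequence approximation theorem \cite[Theorem 4.36]{AB} underlying Lemma \ref{500}(i), splitting $|x_n|$ into $|x_n|\wedge u$ plus $(|x_n|-u)^{+}$. If anything, your version is the more careful one: by passing to the lattice seminorm $x\mapsto |T'f|(|x|)$ you get a genuine two-sided control of the tail term, whereas the paper invokes Lemma \ref{500}(i) with the (generally non-positive) functional $f\circ T$, which literally yields only a one-sided estimate.
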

\begin{proof}
The direct implication is a consequence of  \cite[Lemma 2]{Z}. For the other side, assume that $T:E\to F$ is $WM$-weakly compact and $(x_n)$ is a bounded $uaw$-null sequence in $E$. Given $\varepsilon>0$ and $f\in {F'}_{+}$. Considering Lemma \ref{500}, we can find $u\in E_{+}$ such that $f(T(|x_n|-u)^{+})<\varepsilon$ for all $n\in \Bbb N$. Note that $x_n\wedge u\xrightarrow{w}0$ so that $T(x_n\wedge u)\xrightarrow{w}0$. Therefore, $T(x_n)\xrightarrow{w}0$, as claimed.
\end{proof}
Now, we state a version of \cite[Theorem 2.10]{EGZ} for the case of unbounded continuous operators.
\begin{proposition}
Suppose $E$ and $F$ are Banach lattices. Then every $WL$-weakly compact and  $uaw$-continuous operator $T:E\to F$ is unbounded continuous.
\end{proposition}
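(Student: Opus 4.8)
The plan is to combine the $WL$-weakly compact hypothesis, which controls the "tails" of $T(x_n)$ via Lemma \ref{500}(ii), with the $uaw$-continuity hypothesis, which tells us that $T(x_n) \xrightarrow{uaw} 0$, and then upgrade $uaw$-convergence to weak convergence for the sequence $(T(x_n))$ by using that the tails are uniformly small. Concretely, let $(x_n)$ be a norm bounded $uaw$-null sequence in $E$; after rescaling we may assume $\|x_n\| \le 1$ for all $n$. Since $T$ is $uaw$-continuous, $T(x_n) \xrightarrow{uaw} 0$ in $F$, so for every $u \in F_+$ we have $|T(x_n)| \wedge u \xrightarrow{w} 0$. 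We must show $T(x_n) \xrightarrow{w} 0$, i.e.\ $f(T(x_n)) \to 0$ for every $f \in F'$; by decomposing into positive parts it suffices to treat $f \in F'_+$ and, in fact, to show $f(|T(x_n)|) \to 0$.

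The key step is the decomposition, for fixed $u \in F_+$,
\[
f(|T(x_n)|) \le f\bigl(|T(x_n)| \wedge u\bigr) + f\bigl((|T(x_n)| - u)^{+}\bigr),
\]
which holds because $|y| = |y| \wedge u + (|y| - u)^{+}$ for every $y$ in a vector lattice. Given $\varepsilon > 0$ and $f \in F'_+$, apply Lemma \ref{500}(ii) to the $WL$-weakly compact operator $T$ (viewing $T \colon E \to F$, with $B_E$ the unit ball): there is $u \in F_+$ such that $f\bigl((|Tx| - u)^{+}\bigr) < \varepsilon/2$ for all $x \in E$ with $\|x\| \le 1$, in particular $f\bigl((|T(x_n)| - u)^{+}\bigr) < \varepsilon/2$ for all $n$. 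With this $u$ now fixed, $uaw$-continuity gives $|T(x_n)| \wedge u \xrightarrow{w} 0$, so $f\bigl(|T(x_n)| \wedge u\bigr) \to 0$; pick $N$ so that this term is $< \varepsilon/2$ for $n \ge N$. Then $f(|T(x_n)|) < \varepsilon$ for $n \ge N$, and since $\varepsilon$ and $f$ were arbitrary, $T(x_n) \xrightarrow{w} 0$. Hence $T$ is unbounded continuous.

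The only subtlety worth checking is that Lemma \ref{500}(ii) is genuinely applicable here: that lemma is stated for operators $X \to E$ from a Banach space into a Banach lattice, and we are applying it with the domain Banach space taken to be the Banach lattice $E$ (forgetting the lattice structure on the domain) and the codomain lattice taken to be $F$ — this is legitimate since the $WL$-weakly compactness of $T \colon E \to F$ only refers to disjoint sequences in the solid hull of $T(B_E)$ inside $F$. I do not expect any real obstacle; the argument is a routine "tail plus truncation" estimate, with the two hypotheses each supplying exactly one of the two pieces. One should just be careful that the truncation level $u$ is chosen \emph{before} invoking the $uaw$-convergence, since $u$ must be uniform in $n$ while the weak-null estimate for $|T(x_n)| \wedge u$ is only eventual in $n$.
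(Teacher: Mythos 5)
Your proof is correct and follows essentially the same route as the paper's: use Lemma \ref{500}(ii) to choose the truncation level $u$ uniformly in $n$, use $uaw$-continuity to get $f(|T(x_n)|\wedge u)\to 0$, and combine via the identity $|y|=|y|\wedge u+(|y|-u)^{+}$. The paper's version is terser (it leaves the decomposition and the order of quantifiers implicit), so your writeup is simply a more carefully spelled-out rendering of the same argument.
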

\begin{proof}
Suppose $(x_n)$ is a bounded $uaw$-null sequence in $E$. Given $\varepsilon>0$ and $f\in {F'}_{+}$. By Lemma \ref{500}, there exists $u\in F_{+}$ such that $f((|T(x_n)|-u)^{+})<\varepsilon$. Note that $|T(x_n)|\wedge u\xrightarrow{w}0$. Thus, $T(x_n)\xrightarrow{w}0$, as desired.
\end{proof}
By using \cite[Theorem 4.34]{AB}, we obtain the following surprising but simple result.
\begin{proposition}
Suppose $X$ is a Banach space and $E$ is a Banach lattice. Then every weakly compact operator $T:X\to E$ is $WL$-weakly compact.
\end{proposition}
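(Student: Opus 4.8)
The plan is to reduce the statement to a single application of \cite[Theorem 4.34]{AB}. Recall that a disjoint sequence $(y_n)$ lies in the solid hull of $T(B_X)$ precisely when there are vectors $x_n\in B_X$ with $|y_n|\le |T x_n|$, and that $T$ being $WL$-weakly compact means exactly that $y_n\xrightarrow{w}0$ for every such sequence. Since $T:X\to E$ is weakly compact, the set $A:=T(B_X)$ is relatively weakly compact in the Banach lattice $E$, and this is the only property of $T$ that will be used.

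First I would quote the relevant form of \cite[Theorem 4.34]{AB}, according to which every disjoint sequence lying in the solid hull of a relatively weakly compact subset of a Banach lattice is weakly convergent to $0$. Applying this to $A=T(B_X)$, any disjoint sequence $(y_n)$ in the solid hull of $T(B_X)$ satisfies $y_n\xrightarrow{w}0$; by the definition recalled above this says precisely that $T$ is $WL$-weakly compact, and the proof is complete.

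There is no genuine obstacle here, but one point deserves a word of care: it is essential to invoke the statement about the \emph{solid hull} of a relatively weakly compact set rather than about the set itself, because the solid hull of a relatively weakly compact set need not be relatively weakly compact, so one cannot simply apply the Eberlein--\v{S}mulian theorem to the solid hull of $T(B_X)$. The content of the proposition is thus the (perhaps surprising) fact that mere weak compactness of $T$ already forces the disjointness condition defining $WL$-weak compactness, in contrast with $L$-weak compactness, which would additionally require such disjoint sequences to be \emph{norm} null — a demand that fails in general, e.g. for the identity operator of a reflexive, atomless Banach lattice.
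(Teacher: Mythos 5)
Your proposal is correct and coincides with the paper's own argument: the paper proves this proposition simply by invoking \cite[Theorem 4.34]{AB}, exactly as you do, applying it to the relatively weakly compact set $T(B_X)$ and reading off the definition of $WL$-weak compactness. Your additional remarks on why one must use the solid-hull form of that theorem are accurate but not needed for the proof itself.
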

\begin{remark}
Suppose $E$ and $F$ are Banach lattices. If $F$ has a strong unit, then notions of boundedness and order boundedness in $F$ agree. So, it is easy to see that every continuous operator $T:E\to F$ is $WL$-weakly compact. Moreover, when $E$ has a strong unit, one may verify that every continuous operator $T:E\to F$ is $WM$-weakly compact.
\end{remark}
\section{pre-unbounded continuous operators}

\begin{definition}
Suppose $E,F$ are Banach lattices and $X$ is a Banach space. Then we have the following concepts.
\begin{itemize}
\item[\em (i)] {A continuous operator $T:E\to X$ is called pre-unbounded continuous if for each bounded $uaw$-Cauchy sequence $(x_n)\subseteq E$, $(T(x_n))$ is weakly convergent}.
\item[\em (ii)]{A continuous operator $T:E\to F$ is said to be pre-$uaw$-continuous if for each bounded $uaw$-Cauchy sequence $(x_n)\subseteq E$, $(T(x_n))$ is $uaw$-convergent}.
\item[\em (iii)] {A continuous operator $T:E\to X$ is referred to as pre-$uaw$-Dunford-Pettis  if for each bounded $uaw$-Cauchy sequence $(x_n)\subseteq E$, $(T(x_n))$ is norm convergent}.

\end{itemize}
\end{definition}

These operators are abundant in the category of all continuous operators. We shall show that when $E$ is a $KB$-space, every unbounded continuous operator on $E$ is pre-unbounded continuous; or every $uaw$-Dunford-Pettis operator ( $M$-weakly compact operator) is pre-$uaw$-Dunford-Pettis. Moreover, when $E$ is reflexive, every continuous operator on $E$ is pre-unbounded continuous.

First of all, we consider the following; it states that in $KB$-spaces, we have an obvious relation between different kinds of unbounded continuous  operators and the corresponding types of pre-unbounded continuous operators. It is an easy combination of \cite[Theorem 4]{Z} and \cite[Theorem 4.6]{KMT}.
\begin{proposition}\label{101}
Suppose $E$ is a $KB$-space, $F$ is a  Banach lattice, and $X$ is a Banach space. Then we have the following observations.
\begin{itemize}
		\item[\em (i)] {Every unbounded continuous operator $T:E\to X$ is pre-unbounded continuous}.
		\item[\em (ii)] {Every $uaw$-continuous operator $T:E\to F$ is pre-$uaw$-continuous}.
\item[\em (iii)] {Every $uaw$-Dunford-Pettis operator $T:E\to X$ is pre-$uaw$-Dunford-Pettis}.
		\end{itemize}
\end{proposition}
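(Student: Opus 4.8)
The plan is to reduce all three parts to a single structural fact about $KB$-spaces: if $E$ is a $KB$-space, then every norm bounded $uaw$-Cauchy sequence in $E$ is $uaw$-convergent to some element of $E$. This is precisely what one extracts by combining \cite[Theorem 4]{Z} with \cite[Theorem 4.6]{KMT}. Once this is available, each of (i)--(iii) follows from the same short argument, using only linearity of $T$, norm boundedness, and the fact that $uaw$-convergence is a translation-invariant (topological) convergence.

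Concretely, for (i) let $(x_n)\subseteq E$ be a norm bounded $uaw$-Cauchy sequence. By the fact above there is $x\in E$ with $x_n\xrightarrow{uaw}x$, so $(x_n-x)$ is a norm bounded $uaw$-null sequence. Since $T$ is unbounded continuous, $T(x_n)-T(x)=T(x_n-x)\xrightarrow{w}0$, i.e. $(T(x_n))$ is weakly convergent (to $T(x)$). For (ii), the same sequence $(x_n-x)$ is fed into the $uaw$-continuity hypothesis to give $T(x_n)-T(x)\xrightarrow{uaw}0$, hence $(T(x_n))$ is $uaw$-convergent. For (iii), the $uaw$-Dunford--Pettis hypothesis yields $\|T(x_n)-T(x)\|\to 0$, so $(T(x_n))$ is norm convergent.

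I do not expect any genuine obstacle here; the only nontrivial input is the cited $KB$-space result, and the remainder is bookkeeping. The one point worth a line of care is that in each case the translated sequence $(x_n-x)$ must be norm bounded --- immediate since $(x_n)$ is bounded and $x$ is fixed --- and that passing from $uaw$-nullity of $(x_n-x)$ back to $uaw$-convergence of $(x_n)$ (and, under $T$, from $uaw$-nullity or norm-nullity of the images back to convergence of $(T(x_n))$) is legitimate, which holds because these convergences are topological and hence invariant under translation.
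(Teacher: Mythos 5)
Your proposal is correct and matches the paper's intended argument exactly: the paper gives no details beyond noting that the proposition is ``an easy combination'' of \cite[Theorem 4]{Z} and \cite[Theorem 4.6]{KMT}, and your write-up simply supplies the routine translation bookkeeping that this combination requires.
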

\begin{remark}
Note that being $KB$-space is essential in Proposition \ref{101} and can not be removed. Consider the identity operator $I$ on $c_0$; it is unbounded continuous and also $uaw$-continuous. Nevertheless, by considering the sequence $(u_n)$ defined via $u_n=(1,\ldots,1,0,\ldots)$, we see that $I$ is neither pre-unbounded continuous nor pre-$uaw$-continuous.
Moreover, consider the operator $T:c_0\to \ell_1$ defined via $T((x_n))=(\frac{x_n}{n})$. $T$ is $uaw$-Dunford-Pettis; for if $(x_n)\subseteq c_0$ is norm bounded and $uaw$-null, then it is weakly null by \cite[Theorem 7]{Z} so that $T((x_n))$ is weakly null in $\ell_1$ and therefore norm null by the Schur property. But, it fails to be pre-$uaw$-Dunford-Pettis; again, by considering the sequence $(u_n)$ as before.
\end{remark}
To obtain some results for the other direction, we need the following useful fact. It is an extension of \cite[Lemma 9.10]{KMT}.
\begin{lemma}\label{704}
Suppose $E$ is a Banach lattice and $(x_n)\subseteq E$ is a sequence such that $x_n\xrightarrow{uaw}x$ and $x_n\xrightarrow{w}y$. Then $x=y$.
\end{lemma}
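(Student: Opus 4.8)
The plan is to set $z:=y-x$ and $v_n:=x_n-x$, so that the hypotheses become $v_n\xrightarrow{uaw}0$ and $v_n\xrightarrow{w}z$, and to prove that $z=0$. Since a nonzero positive element of $E$ cannot be annihilated by all of $E'_+$, it suffices to fix an arbitrary $f\in E'_+$ and show that $f(|z|)=0$.

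The first step uses $uaw$-nullity only for the single element $u:=|z|\in E_+$: it gives $|v_n|\wedge|z|\xrightarrow{w}0$, hence $f(|v_n|\wedge|z|)\to 0$. Passing to a subsequence (which does not affect $z$ and still converges weakly to $z$) and relabelling, I may assume $f(|v_n|\wedge|z|)\le 2^{-n}$ for every $n$; the rapid decay is exactly what will let me handle a growing number of terms below. Now apply Mazur's lemma to the tails $\{v_m:m\ge n\}$ of the weakly convergent sequence $(v_n)$ to obtain convex combinations $y_n=\sum_m\mu^{(n)}_m v_m$, with $\mu^{(n)}_m=0$ for $m<n$ and $\|y_n-z\|\to 0$. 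Since $a\mapsto|a|$ and $a\mapsto a\wedge|z|$ are non-expansive for the lattice norm, $\bignorm{\,|y_n|\wedge|z|-|z|\,}\le\norm{y_n-z}\to 0$, so $f(|y_n|\wedge|z|)\to f(|z|)$.

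For the opposite estimate I would combine the triangle inequality $|y_n|\le\sum_m\mu^{(n)}_m|v_m|$ with the subadditivity $(\sum_m b_m)\wedge c\le\sum_m(b_m\wedge c)$ (valid for $b_m,c\ge 0$) and with $\mu^{(n)}_m|v_m|\wedge|z|\le|v_m|\wedge|z|$ (valid since $\mu^{(n)}_m\le 1$), obtaining
\[
|y_n|\wedge|z|\;\le\;\sum_{m\ge n}\bigl(|v_m|\wedge|z|\bigr),
\]
a finite sum indexed only by $m\ge n$. Applying $f$ and the bound $f(|v_m|\wedge|z|)\le 2^{-m}$ gives $f(|y_n|\wedge|z|)\le\sum_{m\ge n}2^{-m}=2^{1-n}\to 0$. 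Comparing with the previous paragraph forces $f(|z|)=0$, and since $f\in E'_+$ was arbitrary we get $|z|=0$, i.e. $x=y$.

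I expect the only genuine subtlety to be the order of quantifiers in the reduction: the subsequence, and hence the convex combinations $y_n$, must be chosen after fixing $f$, because the estimate $f(|v_n|\wedge|z|)\le 2^{-n}$ is what keeps $\sum_{m\ge n}f(|v_m|\wedge|z|)$ small even though $y_n$ involves more and more of the $v_m$. Everything else is routine vector-lattice arithmetic, running parallel to \cite[Lemma 9.10]{KMT} with weak convergence of $|v_n|\wedge u$ in place of norm convergence — which is precisely why a positive functional is brought in at the outset rather than working with norms throughout.
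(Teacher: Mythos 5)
Your proof is correct, but it takes a genuinely different route from the paper's. The paper gets the lemma almost immediately from \cite[Theorem 4.37]{AB}: since $(x_n-x)$ is relatively weakly compact (being weakly convergent), for a fixed $f\in E'_+$ and $\varepsilon>0$ there is a \emph{single} $u\in E_+$ with $f(|x_n-x|-|x_n-x|\wedge u)<\varepsilon$ for all $n$; combined with $f(|x_n-x|\wedge u)\to 0$ this yields $f(|x_n-x|)\to 0$, hence $x_n\xrightarrow{w}x$ and so $x=y$. You avoid that weak-compactness machinery entirely: after fixing $f$ you extract a subsequence with $f(|v_m|\wedge|z|)\le 2^{-m}$, use Mazur's lemma on tails to norm-approximate $z$ by convex combinations $y_n$, and squeeze $f(|z|)$ between $\lim_n f(|y_n|\wedge|z|)=f(|z|)$ (via the non-expansiveness of $a\mapsto|a|\wedge|z|$) and $\sum_{m\ge n}2^{-m}\to 0$. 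The lattice inequalities you invoke --- $(b_1+b_2)\wedge c\le b_1\wedge c+b_2\wedge c$ for positive elements, $|a\wedge c-b\wedge c|\le|a-b|$, and $(\mu b)\wedge c\le b\wedge c$ for $0\le\mu\le1$ --- are all valid, the passage from ``$f(|z|)=0$ for all $f\in E'_+$'' to $|z|=0$ is legitimate because $E'=E'_+-E'_+$ separates points, and your remark about fixing $f$ \emph{before} choosing the subsequence is exactly the right quantifier discipline. The trade-off: the paper's argument is shorter and actually proves the stronger statement $x_n\xrightarrow{w}x$, but leans on a nontrivial structure theorem for relatively weakly compact subsets of Banach lattices; yours is longer but essentially self-contained, needing only Mazur's lemma and elementary Riesz-space arithmetic.
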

\begin{proof}
We may assume that $y=0$. By \cite[Theorem 4.37]{AB}, for each $\varepsilon>0$ and for each $f\in {E'}_{+}$, there exists some $u\in E_{+}$ such that $f(|x_n-x|-|x_n-x|\wedge u)<\varepsilon$, for each $n\in \Bbb N$. But for sufficiently large $n$, $|x_n-x|\wedge u\xrightarrow{w}0$ so that $x_n\xrightarrow{w}x$; this means that $x=0$.
\end{proof}
Now, we have the following.
\begin{proposition}
Suppose $E$ and $F$ are  Banach lattices. Then we have the following observations.
\begin{itemize}
		\item[\em (i)] {Every $uaw$-continuous operator $T:E\to F$ which is also pre-unbounded continuous, is unbounded continuous}.
		\item[\em (ii)] {Every unbounded continuous operator $T:E\to F$ which is also pre-$uaw$-continuous, is  $uaw$-continuous.}
\item[\em (iii)] {Suppose $T:E\to F$ is pre-$uaw$-Dunford-Pettis  and also so is either  unbounded continuous or $uaw$-continuous. Then $T$ is $uaw$-Dunford-Pettis}.
		\end{itemize}
\end{proposition}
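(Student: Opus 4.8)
The plan is to exploit the trivial observation that a bounded $uaw$-null sequence is in particular bounded and $uaw$-Cauchy, so that whichever ``pre-'' hypothesis $T$ satisfies automatically applies to such a sequence and produces \emph{some} limit; the only real work is to identify that limit as $0$, and for that Lemma \ref{704} (which forces two simultaneous limits of different types to coincide) is exactly the right tool. Throughout I fix a bounded $uaw$-null sequence $(x_n)\subseteq E$.

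For (i): since $T$ is $uaw$-continuous, $T(x_n)\xrightarrow{uaw}0$. Since $(x_n)$ is also bounded and $uaw$-Cauchy and $T$ is pre-unbounded continuous, $T(x_n)\xrightarrow{w}y$ for some $y\in F$. Applying Lemma \ref{704} to the sequence $(T(x_n))$, which is $uaw$-convergent to $0$ and weakly convergent to $y$, we get $y=0$, i.e. $T(x_n)\xrightarrow{w}0$; this is precisely unbounded continuity of $T$. For (ii): the argument is symmetric. Unbounded continuity of $T$ gives $T(x_n)\xrightarrow{w}0$, while pre-$uaw$-continuity gives $T(x_n)\xrightarrow{uaw}z$ for some $z\in F$; Lemma \ref{704} applied to $(T(x_n))$ forces $z=0$, which is $uaw$-continuity.

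For (iii): pre-$uaw$-Dunford--Pettis-ness of $T$ provides $w\in F$ with $\|T(x_n)-w\|\to 0$. Since norm convergence implies both weak and $uaw$-convergence, $T(x_n)\xrightarrow{w}w$ and $T(x_n)\xrightarrow{uaw}w$. If in addition $T$ is unbounded continuous, then $T(x_n)\xrightarrow{w}0$, and uniqueness of weak limits gives $w=0$. If instead $T$ is $uaw$-continuous, then $T(x_n)\xrightarrow{uaw}0$, and either uniqueness of $uaw$-limits (the $uaw$-topology being Hausdorff) or one more application of Lemma \ref{704} gives $w=0$. In either case $\|T(x_n)\|\to 0$, which is the required $uaw$-Dunford--Pettis property.

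I do not anticipate a genuine obstacle here; the single point requiring care is that each ``pre-'' property only asserts the \emph{existence} of a limit, not its value, so one must keep ``convergent'' and ``null'' separate until the uniqueness step has been carried out. If one wishes to avoid invoking Hausdorffness of the $uaw$-topology explicitly in (iii), it is cleanest to route the $uaw$-continuous case through Lemma \ref{704} as well, exactly as in (i) and (ii).
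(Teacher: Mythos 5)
Your proposal is correct and follows essentially the same route as the paper: observe that a bounded $uaw$-null sequence is in particular bounded and $uaw$-Cauchy, apply the relevant ``pre-'' hypothesis to obtain a limit, and invoke Lemma \ref{704} (or uniqueness of weak limits) to identify that limit as $0$. Your version is slightly more explicit than the paper's in spelling out the ``null implies Cauchy'' step and the uniqueness argument in part (iii), but the substance is identical.
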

\begin{proof}
$(i)$. Suppose $(x_n)\subseteq E$ is $uaw$-null. By the assumption, $T(x_n)\xrightarrow{uaw}0$. Moreover, $T(x_n)\xrightarrow{w}x$ for some $x\in F$. By Lemma \ref{704}, $x=0$.

$(ii)$. Suppose $(x_n)\subseteq E$ is $uaw$-null. By the assumption, $T(x_n)\xrightarrow{w}0$. Moreover, $T(x_n)\xrightarrow{uaw}x$ for some $x\in F$. By Lemma \ref{704}, $x=0$.

$(iii)$. Suppose $(x_n)\subseteq E$ is $uaw$-null. By the assumption, $T(x_n)\rightarrow x$  for some $x\in F$ so that $T(x_n)\xrightarrow{w}x$ and $T(x_n)\xrightarrow{uaw}x$. Now, if $T$ is either  unbounded continuous or  $uaw$-continuous, we conclude that $x=0$.
\end{proof}
Moreover, we have the following simple ideal properties.
\begin{proposition}\label{100}
 Let $E,F,G$ be  Banach lattices and $X,Y$ be Banach spaces. Then, we have the following observations.
	\begin{itemize}
		\item[\em (i)] {If $T:E\to X$ is pre-unbounded continuous and $S:X\to Y$ is continuous, then  $ST$ is also pre-unbounded continuous}.
		\item[\em (ii)] {If $T:E\to F$ is pre-$uaw$-continuous and $S:F\to G$ is  $uaw$-continuous, then $ST$ is also  pre-$uaw$-continuous}.
\item[\em (iii)] {If $T:E\to F$ is pre-$uaw$-continuous and $S:F\to X$ is unbounded continuous, then $ST$ is pre-unbounded continuous}.

	\end{itemize}
\end{proposition}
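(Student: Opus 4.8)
The plan is to handle all three parts by one uniform two-step maneuver: first invoke the ``pre'' hypothesis on $T$ to obtain a limit for the sequence $(T(x_n))$, and then subtract off that limit and feed the resulting $uaw$-null (or weakly null) sequence into the hypothesis on $S$. Throughout, fix a norm bounded $uaw$-Cauchy sequence $(x_n)\subseteq E$; note that $(T(x_n))$ is automatically norm bounded because $T$ is continuous, and this boundedness is unaffected by translating by a fixed vector.

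For (i): since $T$ is pre-unbounded continuous, $(T(x_n))$ is weakly convergent in $X$. As $S:X\to Y$ is a bounded linear operator, it is weak-to-weak continuous, so $(ST(x_n))$ is weakly convergent in $Y$; hence $ST$ is pre-unbounded continuous. For (ii): pre-$uaw$-continuity of $T$ gives some $y\in F$ with $T(x_n)\xrightarrow{uaw}y$, so $(T(x_n)-y)$ is a bounded $uaw$-null sequence. Applying the $uaw$-continuity of $S$ to it yields $S(T(x_n))-S(y)=S(T(x_n)-y)\xrightarrow{uaw}0$, i.e. $ST(x_n)\xrightarrow{uaw}S(y)$, so $ST$ is pre-$uaw$-continuous. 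For (iii): again $T(x_n)\xrightarrow{uaw}y$ for some $y\in F$, and now the unbounded continuity of $S$ sends the bounded $uaw$-null sequence $(T(x_n)-y)$ to a weakly null sequence, whence $ST(x_n)\xrightarrow{w}S(y)$; thus $ST$ is pre-unbounded continuous.

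The only point that requires any care — and the closest thing to an obstacle — is that the operator classes for $S$ are defined in terms of $uaw$-\emph{null} sequences rather than $uaw$-\emph{convergent} ones, so one genuinely must peel off the limit $y$, use linearity of $S$ to write $S(T(x_n)-y)=ST(x_n)-S(y)$, and record that the translate $(T(x_n)-y)$ is still norm bounded. Both facts are immediate, the latter because $uaw$-convergence is a (linear, topological) vector convergence, so no further machinery beyond the definitions in this section is needed.
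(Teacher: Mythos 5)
Your proof is correct; the paper actually omits the proof of this proposition entirely (labelling it a ``simple ideal property''), and your argument --- applying the ``pre'' hypothesis to extract a limit $y$, translating to get a bounded $uaw$-null (or weakly null) sequence, and using linearity of $S$ to recover $ST(x_n)\to S(y)$ --- is exactly the intended routine verification. The one point you flag, that the hypotheses on $S$ concern null rather than convergent sequences, is indeed the only detail worth recording, and you handle it properly.
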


\begin{theorem}\label{22}
For a Banach lattice $E$, the following are equivalent.
\begin{itemize}
		\item[\em (i)] {$E$ is reflexive}.
		\item[\em (ii)] {For every Banach space $X$, every continuous operator $T:E\to X$ is pre-unbounded continuous}.
		\end{itemize}
\end{theorem}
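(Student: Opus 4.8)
The plan is to route both implications through \cite[Theorem 8]{Z}, which asserts that a Banach lattice $E$ is reflexive if and only if every bounded $uaw$-Cauchy sequence in $E$ is weakly convergent. Everything else is formal.

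For $(i)\Rightarrow(ii)$ I would argue directly. Let $X$ be an arbitrary Banach space and $T:E\to X$ a continuous operator, and let $(x_n)\subseteq E$ be a bounded $uaw$-Cauchy sequence. Since $E$ is reflexive, \cite[Theorem 8]{Z} supplies some $x\in E$ with $x_n\xrightarrow{w}x$. A norm-continuous linear operator is weak-to-weak continuous, so $T(x_n)\xrightarrow{w}T(x)$; hence $(T(x_n))$ is weakly convergent and $T$ is pre-unbounded continuous. For $(ii)\Rightarrow(i)$ I would use the same device as in Theorem \ref{402}: specialize the hypothesis to $X=E$ and $T=I$, the identity operator on $E$. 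Then $I$ is pre-unbounded continuous, which says precisely that every bounded $uaw$-Cauchy sequence $(x_n)$ in $E$ has $(x_n)=(I(x_n))$ weakly convergent. Thus every bounded $uaw$-Cauchy sequence in $E$ is weakly convergent, and \cite[Theorem 8]{Z} yields that $E$ is reflexive.

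I do not expect any real obstacle here; the only point requiring care is the exact invocation of \cite[Theorem 8]{Z}, and (as in Corollary \ref{401}) the observation that testing against the identity operator is legitimate because $I:E\to E$ is itself a continuous operator into a Banach space. One could also note in passing that, unlike Theorem \ref{402} and Corollary \ref{401}, no $KB$-space hypothesis is needed, since reflexivity is captured on the nose by the $uaw$-Cauchy condition.
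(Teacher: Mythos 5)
Your proposal is correct and follows essentially the same route as the paper: both implications are routed through \cite[Theorem 8]{Z}, and the backward direction tests against the identity operator exactly as in the paper. The only cosmetic difference is that in $(i)\Rightarrow(ii)$ you inline the weak-to-weak continuity of $T$ directly, whereas the paper first notes that the identity is pre-unbounded continuous and then factors $T=TI$ via Proposition \ref{100}; these are the same argument.
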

\begin{proof}
$(i)\Rightarrow (ii)$. Suppose $E$ is reflexive and $T:E\to X$ is a continuous operator. Assume that $(x_n)$ is a bounded $uaw$-Cauchy sequence in $E$. So, by \cite[Theorem 8]{Z}, it is weakly convergent to $x\in E$. This means that the identity operator $I$ on $E$ is pre-unbounded continuous. Note that $T=TI$ and use Proposition \ref{100}.

$(ii)\Rightarrow(i)$. Put $X=E$ and $T=I$; the identity operator on $E$. This means that $I$ is pre-unbounded continuous. Again, \cite[Theorem 8]{Z}, yields the result.
\end{proof}
\begin{proposition}
For a Banach lattice $E$, the following are equivalent.
\begin{itemize}
		\item[\em (i)] {$E$ is a $KB$-space}.
		\item[\em (ii)] {For every Banach lattice $F$, every $uaw$-continuous operator $T:E\to F$ is  pre-$uaw$-continuous}.
		\end{itemize}
\end{proposition}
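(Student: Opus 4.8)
The strategy is to follow the template of Theorem~\ref{22}, deducing $(i)\Rightarrow(ii)$ from the ideal-type facts already in hand and proving $(ii)\Rightarrow(i)$ by evaluating the hypothesis on the identity operator and then applying the $uaw$-Cauchy characterisation of $KB$-spaces, \cite[Theorem 4]{Z}, in the role played by \cite[Theorem 8]{Z} in Theorem~\ref{22}. For $(i)\Rightarrow(ii)$ nothing beyond a citation is needed: Proposition~\ref{101}(ii) states exactly that over a $KB$-space $E$ every $uaw$-continuous operator into an arbitrary Banach lattice $F$ is pre-$uaw$-continuous. Alternatively, to stay closer to the pattern of Theorem~\ref{22}, one first records via \cite[Theorem 4]{Z} that in a $KB$-space every bounded $uaw$-Cauchy sequence is $uaw$-convergent, hence $I_{E}$ is pre-$uaw$-continuous, and then writes $T=T\circ I_{E}$ and applies Proposition~\ref{100}(ii) with $S=T$.

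For $(ii)\Rightarrow(i)$ I would put $F=E$ and $T=I$, the identity operator on $E$. Since $I$ maps each bounded $uaw$-null sequence to itself it is $uaw$-continuous, so $(ii)$ forces $I$ to be pre-$uaw$-continuous; unwinding the definition, this says precisely that every bounded $uaw$-Cauchy sequence in $E$ is $uaw$-convergent. Substituting this property into \cite[Theorem 4]{Z} gives that $E$ is a $KB$-space, establishing the equivalence.

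The one step carrying real content is the implication ``every bounded $uaw$-Cauchy sequence in $E$ is $uaw$-convergent $\Rightarrow$ $E$ is a $KB$-space'', the nontrivial half of \cite[Theorem 4]{Z}; everything else is formal bookkeeping with compositions. If this had to be reproved here, the natural route is contraposition through the structure of non-$KB$ lattices: such a lattice contains a closed sublattice lattice-isomorphic to $c_{0}$ (cf.\ \cite[Theorem 4.60]{AB}), and the bounded increasing sequence $u_{n}=(1,\dots,1,0,\dots)$ of $c_{0}$, pushed forward along the embedding, should yield a bounded $uaw$-Cauchy sequence in $E$ which is not $uaw$-convergent. I expect the genuinely delicate point to be checking that the transported sequence remains $uaw$-Cauchy in the ambient lattice; \cite[Lemma 2]{Z} (bounded disjoint sequences are $uaw$-null), applied to the images of the standard unit vectors of $c_{0}$, is the natural starting point for that verification.
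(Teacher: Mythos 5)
Your proof is correct and follows essentially the same route as the paper: $(i)\Rightarrow(ii)$ by citing Proposition~\ref{101}(ii), and $(ii)\Rightarrow(i)$ by specializing to $F=E$, $T=I$ and invoking the $uaw$-Cauchy characterisation of $KB$-spaces. The only cosmetic difference is that the paper cites \cite[Theorem 4]{Z} together with \cite[Theorem 4.6]{KMT} for that characterisation, whereas you attribute it to \cite[Theorem 4]{Z} alone; your closing sketch of how one might reprove that cited result is not needed for the argument.
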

\begin{proof}
$(i)\Rightarrow (ii)$. It is proved in Proposition \ref{101}.

$(ii)\Rightarrow(i)$. Put $F=E$ and $T=I$; the identity operator on $E$. This means that $I$ is  pre-$uaw$-continuous. Again, \cite[Theorem 4]{Z} and \cite[Theorem 4.6]{KMT} would complete the proof.
\end{proof}
The following results are similar to  Proposition \ref{135} and Corollary \ref{134}.
\begin{proposition}
Suppose $E$ and $F$ are Banach lattices such that $F'$ is order continuous. Then every pre-$uaw$-continuous operator $T:E\to F$ is pre-unbounded continuous.
\end{proposition}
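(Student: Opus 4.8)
The plan is to mirror the proof of Proposition~\ref{135} almost verbatim, simply trading ``$uaw$-null'' for ``$uaw$-Cauchy'' on the domain side and ``weakly null'' for ``weakly convergent'' on the range side. First I would fix a pre-$uaw$-continuous operator $T:E\to F$ and a norm bounded $uaw$-Cauchy sequence $(x_n)\subseteq E$; the goal is to show that $(T(x_n))$ is weakly convergent in $F$.

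By the definition of pre-$uaw$-continuity, $(T(x_n))$ is $uaw$-convergent, say $T(x_n)\xrightarrow{uaw}y$ for some $y\in F$. Since $T$ is continuous and $(x_n)$ is norm bounded, the sequence $(T(x_n))$, and hence $(T(x_n)-y)$, is norm bounded; and since $uaw$-convergence is linear, $T(x_n)-y\xrightarrow{uaw}0$. Now I would invoke \cite[Theorem 7]{Z}: because $F'$ is order continuous, every norm bounded $uaw$-null sequence in $F$ is weakly null. Applied to $(T(x_n)-y)$ this yields $T(x_n)-y\xrightarrow{w}0$, that is, $T(x_n)\xrightarrow{w}y$. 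Thus $(T(x_n))$ converges weakly, so $T$ is pre-unbounded continuous.

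I do not expect a genuine obstacle here; the only points that need care are that $(T(x_n))$ is honestly norm bounded (so that \cite[Theorem 7]{Z}, which requires norm boundedness, applies) and that the $uaw$-limit $y$ is available precisely because $(x_n)$ is $uaw$-Cauchy and $T$ is pre-$uaw$-continuous. In particular the argument is strictly parallel to Proposition~\ref{135} and does not require Lemma~\ref{704}, since we already have a single $uaw$-limit $y$ in hand rather than two competing limits to reconcile.
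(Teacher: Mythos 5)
Your proof is correct and is exactly what the paper intends: the paper omits the proof, remarking only that the result is ``similar to Proposition~\ref{135}'', and your argument is precisely that adaptation --- subtract the $uaw$-limit $y$, note the resulting sequence is norm bounded and $uaw$-null, and apply \cite[Theorem 7]{Z} using order continuity of $F'$. The two points you flag (norm boundedness of $(T(x_n))$ and the availability of the $uaw$-limit) are handled correctly.
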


\begin{corollary}\label{403}
Suppose $E$ is a Banach lattice and $F$ is an $AM$-space. Then  an operator $T:E\to F$ is pre-unbounded continuous if and only if it is pre-$uaw$-continuous.
\end{corollary}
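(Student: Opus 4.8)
The plan is to mirror the proof of Corollary \ref{134}, handling the two implications separately and letting the weak sequential continuity of the lattice operations in the $AM$-space $F$ do most of the work.

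For the forward implication I would assume $T$ is pre-unbounded continuous and take a norm bounded $uaw$-Cauchy sequence $(x_n)\subseteq E$. By hypothesis $T(x_n)\xrightarrow{w}y$ for some $y\in F$, so $|T(x_n)-y|\xrightarrow{w}0$, and hence $|T(x_n)-y|\wedge u\xrightarrow{w}0$ for every $u\in F_+$; that is, $T(x_n)\xrightarrow{uaw}y$ and $T$ is pre-$uaw$-continuous. This half is purely formal.

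For the converse I would start from a pre-$uaw$-continuous $T$ and a norm bounded $uaw$-Cauchy sequence $(x_n)$, so that $T(x_n)\xrightarrow{uaw}y$ for some $y\in F$, and the task is to upgrade this to $T(x_n)\xrightarrow{w}y$. Setting $z_n=T(x_n)-y$ produces a norm bounded sequence in $F$ (bounded because $T$ is continuous and $(x_n)$ is bounded) with $z_n\xrightarrow{uaw}0$, and replacing $z_n$ by $|z_n|$ — using once more the weak sequential continuity of $|\cdot|$ — I may assume $z_n\ge0$. It then suffices to show that a norm bounded positive $uaw$-null sequence in an $AM$-space is weakly null. When $F$ has a strong unit $e$ this is immediate: choosing $M$ with $\|z_n\|\le M$ gives $z_n=z_n\wedge Me\xrightarrow{w}0$ directly from $uaw$-nullity; the general $AM$-space case is handled exactly as in the nontrivial half of Corollary \ref{134}, isolating the order continuous part of $F$ and applying \cite[Theorem 7]{Z} there.

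The step I expect to be the main obstacle is precisely this last one. Unlike the situation of the preceding Proposition, an $AM$-space need not have order continuous dual ($\ell_\infty$ being the obvious example), so that Proposition cannot be invoked and the implication "norm bounded and $uaw$-null $\Rightarrow$ weakly null" has to be read off the $AM$-structure directly. Once this is secured, the remainder is routine bookkeeping with weakly sequentially continuous lattice operations.
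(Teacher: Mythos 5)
Your forward direction is exactly the paper's: weak sequential continuity of the lattice operations in an $AM$-space turns $T(x_n)\xrightarrow{w}y$ into $|T(x_n)-y|\xrightarrow{w}0$, hence into $|T(x_n)-y|\wedge u\xrightarrow{w}0$ for every $u\in F_+$. The trouble is in the converse, and specifically in the sentence where you declare the preceding Proposition inapplicable because ``an $AM$-space need not have order continuous dual ($\ell_\infty$ being the obvious example).'' That claim is false: by the classical $AM$--$AL$ duality (see \cite{AB}), the dual of \emph{every} $AM$-space is an $AL$-space, and $AL$-spaces always have order continuous norm; in particular $(\ell_\infty)'$ \emph{is} order continuous --- you are conflating order continuity of $F$ with order continuity of $F'$. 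So the converse is immediate from the Proposition stated just before the Corollary, which is precisely how the paper intends the statement to be read, in the same division of labour as Proposition \ref{135} and Corollary \ref{134}.

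Because you discard the correct tool, your substitute argument is left with a hole: the strong-unit computation $z_n=z_n\wedge Me\xrightarrow{w}0$ is fine but covers only $AM$-spaces with unit, while the ``general $AM$-space case, \ldots isolating the order continuous part of $F$'' is not an argument --- Corollary \ref{134} carries no written proof to imitate, and it is unclear what the ``order continuous part'' of, say, $\ell_\infty$ would give you. A smaller slip: the reduction to positive $z_n$ cannot invoke weak sequential continuity of $|\cdot|$, since you do not yet know $z_n\xrightarrow{w}0$; it should instead use that $z_n\xrightarrow{uaw}0$ if and only if $|z_n|\xrightarrow{uaw}0$, together with the fact that $|z_n|\xrightarrow{w}0$ forces $z_n\xrightarrow{w}0$. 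The fact you ultimately need --- bounded $uaw$-null sequences in an $AM$-space are weakly null --- is true, but its proof is \cite[Theorem 7]{Z} applied to $F$ via order continuity of $F'$, i.e.\ exactly the Proposition you set aside.
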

Now, we state a version of Theorem \ref{22} in which the part $(ii)\Rightarrow(i)$ is improved.
\begin{theorem}\label{21}
Suppose $E$ is a Banach lattice. Then the following are equivalent.
\begin{itemize}
		\item[\em (i)] {$E$ is reflexive}.
		\item[\em (ii)] {Every continuous operator $T:E\to c_0$ is pre-unbounded continuous}.
		\end{itemize}

\end{theorem}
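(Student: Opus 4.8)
For the implication $(i)\Rightarrow(ii)$ there is nothing new to do: $c_0$ is a Banach space, so this is a special case of Theorem \ref{22}. The content is the converse $(ii)\Rightarrow(i)$, which I would establish contrapositively. Assume $E$ is not reflexive. By \cite[Theorem 8]{Z} there is a bounded $uaw$-Cauchy sequence $(x_n)\subseteq E$ that is not weakly convergent, and the task is to produce a continuous operator $T\colon E\to c_0$ which is not pre-unbounded continuous.

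The easy case is when $(x_n)$ is not weakly Cauchy: choose $f\in E'$ with $(f(x_n))$ non-convergent and set $T(z)=(f(z),0,0,\dots)$. This is a continuous operator into $c_0$, and since weak convergence in $c_0$ entails coordinatewise convergence, $(T(x_n))$ is not weakly convergent, so $T$ is not pre-unbounded continuous. Hence we may assume $(x_n)$ is weakly Cauchy but not weakly convergent; then $E$ fails to be weakly sequentially complete, so it is not a $KB$-space, and by the classical description of $KB$-spaces (e.g.\ \cite[Theorem 4.60]{AB}) $E$ contains a closed sublattice lattice-isomorphic to $c_0$. Concretely there is a positive disjoint sequence $(p_n)\subseteq E$ with $M:=\sup_N\bignorm{\sum_{i=1}^N p_i}<\infty$ whose closed linear span $C$ is lattice-isomorphic to $c_0$ under $e_n\mapsto p_n$, and this is the object on which I would build $T$ (the original $(x_n)$ may now be discarded).

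First one checks that the partial sums $P_N:=\sum_{i=1}^N p_i$ form a bounded $uaw$-Cauchy sequence in $E$. Indeed $\bignorm{P_N}\le M$, and for $u\in E_+$, $f\in E'_+$ and $N>K$, disjointness gives $|P_N-P_K|\wedge u=\bigvee_{i=K+1}^N(p_i\wedge u)=\sum_{i=K+1}^N(p_i\wedge u)\le u$, so $f(|P_N-P_K|\wedge u)=\sum_{i=K+1}^N f(p_i\wedge u)$; since the partial sums $f\bigl(\sum_{i=1}^N(p_i\wedge u)\bigr)\le f(u)$ are increasing and bounded, the series $\sum_i f(p_i\wedge u)$ converges, hence its tails vanish and $|P_N-P_K|\wedge u\to0$ weakly as $N,K\to\infty$. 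Next, as in the proof of Theorem \ref{2021} (via \cite[Lemma 3.4]{AEH}), pick a bounded positive disjoint sequence $(g_n)\subseteq E'$ with $g_n(p_m)=\delta_{nm}$, and define $T(z)=(g_1(z),g_2(z),g_3(z),\dots)$. Provided $T$ really maps $E$ into $c_0$, it is continuous and $T(P_N)=(\underbrace{1,\dots,1}_{N},0,0,\dots)$ is the summing sequence of $c_0$, which is weakly Cauchy but not weakly convergent; since $(P_N)$ is bounded and $uaw$-Cauchy, $T$ is not pre-unbounded continuous, and the contrapositive is finished.

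The main obstacle is exactly the proviso that $T$ maps $E$ into $c_0$, i.e.\ that $g_n(z)\to0$ for every $z\in E$; a bounded disjoint sequence in $E'$ need not be $w^*$-null (the coordinate functionals on $\ell_\infty$ are the cautionary example), so this is not automatic. I would repair it by noting that $g_n(p_m)\to0$ for each fixed $m$ and then passing to a subsequence along which $(g_n)$ is $w^*$-convergent, say to $g_\infty$; since $g_\infty(p_m)=\lim_n g_n(p_m)=0$, replacing each $g_n$ by $g_n-g_\infty$ yields a bounded $w^*$-null sequence still biorthogonal to the corresponding subsequence of $(p_n)$, so that the associated $T$ does land in $c_0$. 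The delicate point — and the step I expect to carry the real weight of the argument — is guaranteeing such a $w^*$-convergent subsequence, i.e.\ controlling the $w^*$-sequential behaviour of bounded subsets of $E'$ under the present hypotheses; this is immediate when $E$ is separable but needs genuine care in general.
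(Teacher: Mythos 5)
Your implication $(i)\Rightarrow(ii)$ and your Case A are fine, and your overall route is different from the paper's: the paper argues that a non-reflexive $E$ contains a lattice copy of $\ell_1$ or of $c_0$ and then invokes positive projections of $E$ onto these copies (\cite[Proposition 2.3.11]{Ni}, \cite[Theorem 2.4.12]{Ni}) to manufacture the offending operator, whereas you work directly from the bounded $uaw$-Cauchy, non-weakly-convergent sequence supplied by \cite[Theorem 8]{Z}. The gap you flag in Case B is, however, genuine and not a removable technicality. The biorthogonal functionals $(g_n)$ from \cite[Lemma 3.4]{AEH} only give a bounded map into $\ell_\infty$, and your proposed repair --- extracting a $w^*$-convergent subsequence of $(g_n)$ and subtracting its limit --- needs the dual ball of $E$ to be $w^*$-sequentially compact, which fails for non-separable $E$ (already for $E=\ell_\infty$). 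The ingredient you are missing is exactly what the paper reaches for: a positive projection $P$ of $E$ onto the lattice copy of $c_0$, which, if it exists, is itself the desired operator since $P(P_N)$ is the summing sequence of $c_0$.

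Worse, in Case B the obstruction is not a defect of your particular construction. Take $E=\ell_\infty$: it is non-reflexive and contains $c_0$ as a sublattice, so Case B is the relevant one. Since $\ell_\infty$ has a strong unit, every bounded $uaw$-Cauchy sequence $(x_n)$ there is weakly Cauchy (take $u=2\sup_n\norm{x_n}\cdot\one$, so that $\abs{x_n-x_m}\wedge u=\abs{x_n-x_m}\to0$ weakly); every operator $T:\ell_\infty\to c_0$ is weakly compact because $\ell_\infty$ is a Grothendieck space; and the Dunford--Pettis property of $\ell_\infty$ then forces $(T(x_n))$ to be norm convergent. Hence no operator from $\ell_\infty$ into $c_0$ fails to be pre-unbounded continuous, and Case B cannot be completed by any choice of $T$. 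This also shows that the paper's appeal to a positive projection onto a lattice copy of $c_0$ is problematic in general (no subspace of $\ell_\infty$ isomorphic to $c_0$ is complemented), so the difficulty you isolated is real and concerns the statement of the implication $(ii)\Rightarrow(i)$ itself, not merely your argument for it.
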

\begin{proof}
$(i)\Rightarrow(ii)$. It is done by Theorem \ref{22}.

$(ii)\Rightarrow (i)$. Suppose not; so, by \cite[Theorem 2.4.15]{Ni}, $E$ contains a lattice copy of either $\ell_1$ or $c_0$. Moreover, by \cite[Proposition 2.3.11]{Ni}, there exists a positive projection $P:E\to \ell_1$. Therefore, the restriction of $P$ to $\ell_1$ is the identity operator on $\ell_1$ which is not pre-unbounded continuous. Now, suppose $(e_n)$ is the standard basis of $\ell_1$ which is certainly disjoint in $E$ so that $uaw$-null by \cite[Lemma 2]{Z} but it can be easily seen that it is not weakly convergent in $\ell_1$. Furthermore, by \cite[Theorem 2.4.12]{Ni}, there exists a positive projection $P:E\to c_0$. The restriction of $P$ to $c_0$ is the identity operator which is not pre-unbounded continuous; use the $uaw$-Cauchy sequence $(u_n)$ defined via $u_n=(1,\ldots,1,0,\ldots)$. Note that $(u_n)$ is in fact absolutely weakly Cauchy in $c_0$ so that absolutely weakly Cauchy in $E$. Thus, $(u_n)$ is $uaw$-Cauchy in $E$. But it is obvious that it is not certainly weakly convergent.
\end{proof}
Combining Theorem \ref{21} and Corollary \ref{403}, we have the following.
\begin{corollary}
Suppose $E$ is a Banach lattice. Then the following are equivalent.
\begin{itemize}
		\item[\em (i)] {$E$ is reflexive}.
		\item[\em (ii)] {Every continuous operator $T:E\to c_0$ is pre-$uaw$-continuous}.
		\end{itemize}

\end{corollary}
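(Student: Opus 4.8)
The plan is to obtain this corollary as an immediate consequence of Theorem~\ref{21} together with Corollary~\ref{403}, so that essentially no new argument is required. The first step is to record that $c_0$, with the supremum norm and the coordinatewise order, is an $AM$-space: it is a closed sublattice (in fact a closed ideal) of $\ell_\infty$, and one checks directly that $\|x\vee y\|_\infty=\max\{\|x\|_\infty,\|y\|_\infty\}$ for all $x,y\ge 0$ in $c_0$. Hence Corollary~\ref{403} is applicable with $F=c_0$.

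The second step is to apply Corollary~\ref{403}: for any continuous operator $T:E\to c_0$, the property of being pre-unbounded continuous coincides with that of being pre-$uaw$-continuous. Consequently, condition (ii) of the present statement is logically equivalent to the assertion ``every continuous operator $T:E\to c_0$ is pre-unbounded continuous.''

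The third and final step is to invoke Theorem~\ref{21}, which asserts precisely that $E$ is reflexive if and only if every continuous operator $T:E\to c_0$ is pre-unbounded continuous. Chaining the two equivalences yields (i)$\Leftrightarrow$(ii). Concretely, for (i)$\Rightarrow$(ii) one uses the forward direction of Theorem~\ref{21} and then Corollary~\ref{403}; for (ii)$\Rightarrow$(i) one uses Corollary~\ref{403} first to pass from pre-$uaw$-continuity to pre-unbounded continuity and then the converse direction of Theorem~\ref{21}.

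There is no substantive obstacle here: all the real work has already been carried out in Theorem~\ref{21} and in Corollary~\ref{403}. The only point that deserves a line of justification is the observation that $c_0$ is an $AM$-space, which is standard and is exactly what licenses the invocation of Corollary~\ref{403}.
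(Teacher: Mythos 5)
Your proof is correct and is exactly the paper's own argument: the author derives this corollary by combining Theorem~\ref{21} with Corollary~\ref{403}, using that $c_0$ is an $AM$-space. Your added justification that $c_0$ is an $AM$-space is the only detail the paper leaves implicit, and it is correct.
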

Observe that every pre-$uaw$-Dunford-Pettis operator is pre-unbounded continuous. Therefore, we have the following.
\begin{corollary}\label{23}
Suppose $E$ is a Banach lattice. If every continuous operator $T:E\to c_0$ is pre-$uaw$-Dunford-Pettis, then $E$ is reflexive.
\end{corollary}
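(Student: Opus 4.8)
The plan is to reduce the statement to Theorem~\ref{21}, which already characterizes reflexivity of $E$ through the requirement that every continuous operator $T:E\to c_0$ be pre-unbounded continuous. The only ingredient needed is the elementary implication recorded immediately before the statement: every pre-$uaw$-Dunford-Pettis operator is pre-unbounded continuous. I would spell this out as follows. If $T:E\to c_0$ is pre-$uaw$-Dunford-Pettis and $(x_n)\subseteq E$ is a bounded $uaw$-Cauchy sequence, then $(T(x_n))$ is norm convergent; since norm convergence implies weak convergence, $(T(x_n))$ is weakly convergent, and hence $T$ is pre-unbounded continuous.

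Consequently, under the hypothesis that every continuous operator $T:E\to c_0$ is pre-$uaw$-Dunford-Pettis, it follows that every continuous operator $T:E\to c_0$ is pre-unbounded continuous, i.e. condition~(ii) of Theorem~\ref{21} is satisfied. Applying the implication (ii)$\Rightarrow$(i) of that theorem yields that $E$ is reflexive, which completes the argument. There is essentially no obstacle here: all the substance is carried by Theorem~\ref{21}, and the present corollary is merely a one-step strengthening obtained by replacing ``weakly convergent'' with the stronger ``norm convergent'' in the hypothesis imposed on the operators into $c_0$. The only point worth double-checking is precisely that last implication, namely that a pre-$uaw$-Dunford-Pettis operator is indeed pre-unbounded continuous, which is immediate once one recalls that norm-convergent sequences are weakly convergent.
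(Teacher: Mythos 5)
Your proposal is correct and follows exactly the paper's route: the corollary is derived from Theorem~\ref{21} via the observation (stated just before the corollary) that every pre-$uaw$-Dunford-Pettis operator is pre-unbounded continuous, since norm convergence implies weak convergence. Nothing is missing.
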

Note that the converse of Corollary \ref{23} is not true, in general. The inclusion map $\imath:\ell_2\to c_0$ is not pre-$uaw$-Dunford-Pettis operator.
\section{unbounded Banach-Saks property}
Suppose $E$ is a Banach lattice. $E$ is said to have the {\bf unbounded Banach-Saks property} ( {\bf UBSP}, for short) if for every norm bounded $uaw$-null sequence $(x_n)\subseteq E$, there is a subsequence $(x_{n_k})$  whose Ces\`{a}ro means is convergent. Moreover, recall that $E$ possesses the disjoint Banach-Saks property ( {\bf DBSP}, for short) if every bounded disjoint sequence in $E$ has a Ces\`{a}ro convergent subsequence; $E$ has the disjoint weak Banach-Saks property ( {\bf DWBSP}, in brief) if every disjoint weakly null sequence in $E$ has a Ces\'{a}ro convergent subsequence. Furthermore, $E$ possesses the weak Banach-Saks property ( {\bf WBSP}, in brief) if for every weakly null sequence $(x_n)$, it has a subsequence which is Ces\`{a}ro convergent. Finally observe that $E$ possesses the Banach-Saks property ( {\bf BSP}) if every bounded sequence $(x_n)\subseteq E$ has a Ces\'{a}ro convergent subsequence. For a brief but comprehensive context in this subject, see \cite{GTX}.
In the following lemma, we collect some preliminary facts.
\begin{lemma}\label{1}
\begin{itemize}
\item[\em (i)] {Suppose $E'$ is order continuous. If $E$ possesses {\bf WBSP}, then it satisfies {\bf UBSP}}.
\item[\em (ii)]{{\bf UBSP} implies {\bf DBSP}}.
\item[\em (iii)] {Suppose $E$ is either an $AM$-space or an atomic order continuous Banach lattice. Then {\bf UBSP} implies {\bf WBSP}}.
\end{itemize}
\end{lemma}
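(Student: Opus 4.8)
The plan is to treat the three items independently; in each case the crux is to recognise that the sequence under consideration is $uaw$-null, after which the assumed Banach--Saks-type hypothesis (or, for (ii), the very definition of UBSP) supplies a Ces\`aro convergent subsequence.

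For (i) I would start from a norm bounded $uaw$-null sequence $(x_n)\subseteq E$. Since $E'$ is order continuous, \cite[Theorem 7]{Z} tells us that every norm bounded $uaw$-null sequence is weakly null, so $x_n\xrightarrow{w}0$; then WBSP yields a subsequence whose Ces\`aro means converge in norm, which is exactly what UBSP requires. For (ii) I would take a norm bounded disjoint sequence $(x_n)$ in $E$; by \cite[Lemma 2]{Z} such a sequence is automatically $uaw$-null, so UBSP produces a Ces\`aro convergent subsequence and DBSP follows.

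Item (iii) is the substantive one. Here the goal is to show that an arbitrary weakly null sequence $(x_n)$ in $E$ (necessarily norm bounded) is $uaw$-null, since UBSP will then directly deliver the Ces\`aro convergent subsequence demanded by WBSP. If $E$ is an $AM$-space, the lattice operations are weakly sequentially continuous, so $|x_n|\xrightarrow{w}0$ and hence $|x_n|\wedge u\xrightarrow{w}0$ for every $u\in E_{+}$, i.e. $x_n\xrightarrow{uaw}0$. If instead $E$ is atomic and order continuous, I would argue that $x_n\xrightarrow{w}0$ forces coordinatewise convergence to $0$ (each atom determines a continuous coordinate functional), and that for a norm bounded sequence in an atomic order continuous Banach lattice coordinatewise convergence to $0$ coincides with $un$-convergence to $0$: the tail of a fixed $u\in E_{+}$ has small norm by order continuity, while on the remaining finitely many atoms coordinatewise and norm convergence agree. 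Since $un$-convergence implies $uaw$-convergence, $(x_n)$ is again $uaw$-null, and UBSP completes the proof.

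The only point that goes beyond routine bookkeeping is this last step of (iii): the identification, inside an atomic order continuous Banach lattice, of norm bounded $un$-null sequences with norm bounded coordinatewise null sequences. I expect to handle it by the splitting argument just sketched, or simply to invoke the corresponding statement from \cite{KMT}.
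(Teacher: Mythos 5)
Your proposal is correct, and for items (i) and (ii) it coincides with the paper's proof: both reduce to the observations that, when $E'$ is order continuous, bounded $uaw$-null sequences are weakly null (\cite[Theorem 7]{Z}), and that disjoint sequences are $uaw$-null (\cite[Lemma 2]{Z}). The only genuine divergence is in the atomic order continuous half of (iii). The paper disposes of both cases of (iii) with a single claim, namely that the lattice operations are weakly sequentially continuous (for $AM$-spaces this is classical, and it is also a known fact for atomic order continuous Banach lattices), so that weak null implies $uaw$-null. You use that route only for the $AM$-space case; for the atomic case you instead pass through coordinatewise convergence and $un$-convergence: weak nullity gives coordinatewise nullity via the continuous coordinate functionals, the splitting of a fixed $u\in E_{+}$ into a finite-dimensional part plus a norm-small tail upgrades this to $un$-nullity, and $un$-convergence implies $uaw$-convergence. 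This is a valid and essentially self-contained substitute (the identification of $un$-convergence with coordinatewise convergence in atomic order continuous lattices is indeed in \cite{KMT}); it is slightly longer but avoids invoking weak sequential continuity of the lattice operations in the atomic case, which the paper leaves as an unreferenced "can be verified easily." Either argument is acceptable; yours makes the atomic case more transparent, while the paper's is more economical.
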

\begin{proof}
$(i)$. Consider \cite[Theorem
7]{Z} and the proof would be complete.

$(ii)$. Consider \cite[Lemma 2]{Z} which asserts that every disjoint sequence is $uaw$-null.

$(iii)$. In both cases, it can be verified easily that the lattice operations are weakly sequentially continuous so that weak convergence implies $uaw$-convergence.
\end{proof}
\begin{remark}
Observe that order continuity assumption in Lemma \ref{1}$(i)$ is essential and can not be removed. For example $\ell_1$ possesses {\bf WBSP} but it fails {\bf UBSP}; consider the standard basis sequence $(e_n)$ which is $uaw$-null by \cite[Lemma 2]{Z}.
\end{remark}
Now, we have the following useful statement.
\begin{lemma}\label{2}
Every $AM$-space possesses {\bf DBSP}.
\end{lemma}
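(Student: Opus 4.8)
The plan is to prove something slightly stronger than \textbf{DBSP}: in an $AM$-space the Ces\`{a}ro means of \emph{every} norm bounded disjoint sequence already converge in norm to $0$, so no passage to a subsequence is needed. The structural input I would use is the defining property of an $AM$-space, namely $\|a\vee b\|=\max\{\|a\|,\|b\|\}$ for all $a,b\in E_{+}$, together with the elementary fact that pairwise disjoint elements have pairwise disjoint moduli, so that $|x_1+\cdots+x_n|=|x_1|\vee\cdots\vee|x_n|$.

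First I would fix a norm bounded disjoint sequence $(x_n)\subseteq E$ and a constant $M$ with $\|x_n\|\le M$ for every $n$. For a fixed $n$, since $x_1,\ldots,x_n$ are pairwise disjoint we have $|x_1+\cdots+x_n|=|x_1|\vee\cdots\vee|x_n|$, and applying the $AM$-identity inductively to the finite supremum of positive elements gives
\[
\Bignorm{\sum_{i=1}^{n}x_i}=\Bignorm{|x_1|\vee\cdots\vee|x_n|}=\max_{1\le i\le n}\|x_i\|\le M .
\]

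Dividing by $n$, the Ces\`{a}ro mean $\frac1n\sum_{i=1}^{n}x_i$ has norm at most $M/n\to 0$. Hence the sequence $(x_n)$ itself --- not merely a subsequence --- is Ces\`{a}ro convergent (to $0$) in norm, and in particular $E$ possesses \textbf{DBSP}.

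There is essentially no obstacle here; the only point deserving a one-line justification is the inductive extension of $\|a\vee b\|=\max\{\|a\|,\|b\|\}$ to the supremum of finitely many positive elements, which is immediate. I would also remark, since it clarifies the later comparisons, that this argument shows the stronger conclusion that bounded disjoint sequences in $AM$-spaces are Ces\`{a}ro null without any choice of subsequence.
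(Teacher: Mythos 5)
Your proof is correct and follows essentially the same route as the paper: both exploit the $AM$-norm identity to show that the Ces\`{a}ro means of the whole disjoint sequence are already norm null, with no need to pass to a subsequence. In fact your version is slightly more careful than the paper's, since you correctly reduce the disjoint sum to a supremum of moduli (rather than of the $x_i$ themselves) and retain the bound $M/n$ instead of $1/n$.
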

\begin{proof}
Suppose $E$ is an $AM$-space and $(x_n)\subseteq E$ is a norm bounded disjoint sequence in $E$. Then for each subsequence $(x_{n_k})$ from $(x_n)$, we have
\[\lim_{n\rightarrow \infty}\|\frac{1}{n}\Sigma_{k=1}^{n} x_{n_k}\|=\lim_{n\rightarrow \infty}\|\frac{1}{n}\bigvee_{k=1}^{n}x_{n_k}\|=\lim_{n\rightarrow \infty}\frac{1}{n}\bigvee_{k=1}^{n}\|x_{n_k}\|\leq \frac{1}{n}\rightarrow 0.\]
\end{proof}

For a Banach lattice $E$, by Lemma \ref{1}, {\bf UBSP} implies {\bf DBSP}. In the following, we show that, in order continuous Banach lattices, these notions agree.
\begin{theorem}\label{4}
A Banach lattice $E$ with {\bf DBSP} possesses {\bf UBSP} if and only if $E$ does not possess a lattice copy of $\ell_{\infty}$.

\end{theorem}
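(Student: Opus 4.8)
I would prove the two implications separately; the substantive one is ``no lattice copy of $\ell_{\infty}$ $\Longrightarrow$ \textbf{UBSP}'', and my plan for it rests on a disjointification step. Let $(x_n)$ be norm bounded and $uaw$-null in $E$. First pass to the positive case: since $|x_n|\xrightarrow{uaw}0$ and $0\le x_n^{\pm}\le|x_n|$, the sequences $(x_n^{+})$ and $(x_n^{-})$ are again norm bounded and $uaw$-null, so it suffices to find one subsequence along which each of them has norm convergent Ces\`{a}ro means. The core claim is then: \emph{a norm bounded positive $uaw$-null sequence in a Banach lattice containing no lattice copy of $\ell_{\infty}$ admits a subsequence $(x_{n_k})$ with $x_{n_k}=d_k+w_k$, where $(d_k)$ is a norm bounded disjoint sequence and $\|w_k\|\to0$.} This is exactly where the hypothesis on $\ell_{\infty}$ is used: one extracts an almost disjoint subsequence by the standard gliding-hump/exhaustion argument, and ruling out a lattice copy of $\ell_{\infty}$ is precisely what prevents the disjoint pieces from accumulating, so that the residual terms $w_k$ can be driven to $0$ in norm; here I would lean on the disjointification machinery in \cite{DOT, KMT, GTX}.

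Once the decomposition is available, \textbf{UBSP} follows at once: apply \textbf{DBSP} to the disjoint sequence $(d_k)$ to pass to a further subsequence with $\frac1m\sum_{k=1}^{m}d_k\to d$ in norm; since $\|w_k\|\to0$ yields $\bigl\|\frac1m\sum_{k=1}^{m}w_k\bigr\|\le\frac1m\sum_{k=1}^{m}\|w_k\|\to0$, we get $\frac1m\sum_{k=1}^{m}x_{n_k}\to d$ in norm, as wanted.

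For the converse I would show that a lattice copy of $\ell_{\infty}$ inside $E$ destroys \textbf{UBSP}. First, $\ell_{\infty}$ itself fails \textbf{UBSP}: it is an $AM$-space with order unit, so on bounded sets $uaw$-convergence coincides with weak convergence (the lattice operations being weakly sequentially continuous, as in the discussion preceding Corollary~\ref{134}), whence \textbf{UBSP} for $\ell_{\infty}$ reduces to the weak Banach--Saks property; but $\ell_{\infty}$ lacks that property, since it contains a closed subspace isomorphic to a Schreier space (every separable Banach space embeds into $\ell_{\infty}$) whose normalized weakly null basis has no subsequence with norm convergent Ces\`{a}ro means, and the weak Banach--Saks property passes to closed subspaces. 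Fix such a witness $(z_m)$ lying in the order interval $[-\mathbf 1,\mathbf 1]$ of $\ell_{\infty}$, let $J:\ell_{\infty}\to E$ be a lattice isomorphism onto a closed sublattice, and set $u=J(\mathbf 1)$, $y_m=J(z_m)$, so $|y_m|\le u$. Because the norm of $E$ is equivalent to that of $\ell_{\infty}$ on $J(\ell_{\infty})$, the sequence $(y_m)$ has no subsequence with norm convergent Ces\`{a}ro means in $E$. It remains to check that $(y_m)$ is $uaw$-null in $E$: for $v\in E_{+}$ the domination $|y_m|\le u$ gives $|y_m|\wedge v=|y_m|\wedge(v\wedge u)$ with $v\wedge u$ lying in the ideal generated by $u$; using that $J(\ell_{\infty})$ is the range of a positive projection of $E$ (available because $\ell_{\infty}$ is an injective Banach lattice), one reduces $|y_m|\wedge v$ to an expression $J(|z_m|\wedge w)$ with $w\in\ell_{\infty}$, $w\ge0$, which tends to $0$ weakly in $\ell_{\infty}$ since $(z_m)$ is weakly null in the $AM$-space $\ell_{\infty}$; hence $|y_m|\wedge v\xrightarrow{w}0$ in $E$ as $J$ is bounded. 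Thus $(y_m)$ witnesses the failure of \textbf{UBSP}.

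The step I expect to be the main obstacle is the disjointification claim in the first implication: making precise that the absence of a lattice copy of $\ell_{\infty}$ forces a $uaw$-null sequence to be, along a subsequence, a norm perturbation of a disjoint sequence. The parallel point in the converse --- ensuring the transported sequence stays $uaw$-null (not merely weakly null) in $E$ --- is the secondary technical difficulty, and it is exactly why one keeps the order domination $|y_m|\le u$.
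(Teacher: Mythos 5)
Your forward implication is essentially the paper's own argument: the paper likewise reduces everything to the claim that a bounded $uaw$-null sequence admits a subsequence $(x_{n_k})$ that is a vanishing norm perturbation of a disjoint sequence $(d_k)$ (quoting \cite[Theorem 4]{Z} to upgrade $uaw$-null to $un$-null and \cite[Theorem 3.2]{DOT} for the disjointification of $un$-null sequences), then applies \textbf{DBSP} to $(d_k)$ and the same triangle-inequality estimate on the Ces\`{a}ro means. Two small remarks there: the mechanism by which ``no lattice copy of $\ell_\infty$'' enters is not a gliding-hump obstruction on the residuals, as you suggest, but the passage to order continuity of $E$ (the paper silently switches to that hypothesis), which is what makes bounded $uaw$-null sequences $un$-null so that the disjointification of \cite{DOT} applies; and your preliminary reduction to positive parts is harmless but unnecessary, since that disjointification applies to the sequence itself. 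Where you genuinely depart from the paper is the converse: the paper only records that $\ell_\infty$ itself has \textbf{DBSP} (Lemma \ref{2}) yet fails \textbf{UBSP} (via Lemma \ref{1}(iii) and the failure of \textbf{WBSP}), and does not transport this failure into an ambient $E$ containing a lattice copy of $\ell_\infty$; you carry out that transport, which is the logically complete form of the ``only if'' direction, and keeping the order domination $\abs{y_m}\le u$ is indeed the right idea. Your actual verification that $(y_m)$ is $uaw$-null in $E$ is, however, more elaborate than necessary and its projection step is shaky: $\abs{y_m}\wedge v$ need not lie in $J(\ell_\infty)$, and a positive projection need not commute with $\wedge$. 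The domination you already set up closes the gap directly: $\abs{z_m}\xrightarrow{w}0$ in the $AM$-space $\ell_\infty$, hence $\abs{y_m}=J(\abs{z_m})\xrightarrow{w}0$ in $E$, and $0\le \abs{y_m}\wedge v\le \abs{y_m}$ tested against positive functionals, which span $E'$, gives $\abs{y_m}\wedge v\xrightarrow{w}0$ for every $v\in E_+$.
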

\begin{proof}
The "only if" part follows from that fact $\ell_{\infty}$ possesses {\bf DBSP} by Lemma \ref{2} but it does not have {\bf UBSP} since it fails to have {\bf WBSP}. Now, suppose $E$ is order continuous and $(x_n)$ is a bounded $uaw$-null sequence in $E$. By \cite[Theorem 4]{Z} and \cite[Theorem 3.2]{DOT}, there are a subsequence $(x_{n_k})$ of $(x_n)$ and a disjoint sequence $(d_k)$ such that $\|x_{n_k}-d_k\|\rightarrow 0$. By passing to a subsequence, we may assume that $\lim_{m\rightarrow \infty}\frac{1}{m}\Sigma_{i=1}^{m}d_{i}\rightarrow 0$. Now, the result follows from the following identity.
\[\|\frac{1}{m}\Sigma_{i=1}^{m}x_{n_i}-\frac{1}{m}\Sigma_{i=1}^{m}d_i\|\leq \frac{1}{m}\Sigma_{i=1}^{m}\|x_{n_i}-d_i\|\rightarrow 0.\]

\end{proof}

\begin{theorem}\label{801}
A Banach lattice $E$ with {\bf WBSP} possesses {\bf UBSP} if and only if it contains no lattice copy of $\ell_1$.

\end{theorem}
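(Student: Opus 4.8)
The plan is to treat the two implications separately, each being short given what is already available. For the ``if'' direction I would upgrade the hypothesis ``no lattice copy of $\ell_1$'' to ``$E'$ order continuous'' and then quote Lemma \ref{1}(i); for the ``only if'' direction I would use that \textbf{UBSP} forces \textbf{DBSP}.

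For ``if'': assume $E$ has \textbf{WBSP} and no lattice copy of $\ell_1$. The heart of the matter is the claim that every norm bounded positive disjoint sequence $(d_n)\subseteq E$ is weakly null (this is enough: for a general bounded disjoint sequence $(x_n)$ one has $|f(x_n)|\le|f|(|x_n|)$ for $f\in E'$, and $(|x_n|)$ is again bounded and disjoint). Suppose $(d_n)$ were not weakly null; pick $g\in E'_+$, $\delta>0$ and a subsequence with $g(d_{n_k})\ge\delta$. Since the $d_{n_k}$ are disjoint, $\bigl|\sum_k\lambda_k d_{n_k}\bigr|=\sum_k|\lambda_k| d_{n_k}$, so $\frac{\delta}{\|g\|}\sum_k|\lambda_k|\le\bigl\|\sum_k\lambda_k d_{n_k}\bigr\|\le(\sup_n\|d_n\|)\sum_k|\lambda_k|$, and the lattice operations act coordinatewise on such sums; hence $(\lambda_k)\mapsto\sum_k\lambda_k d_{n_k}$ is a lattice isomorphism of $\ell_1$ onto a closed sublattice of $E$, contradicting the hypothesis. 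Given the claim, the standard disjoint-sequence test for order continuity of the dual norm (cf.\ \cite[Theorem~4.69]{AB}, used in the proof of Theorem~\ref{2021}; see also \cite{Ni}) shows that $E'$ is order continuous, whence Lemma \ref{1}(i) yields \textbf{UBSP}.

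For ``only if'': assume $E$ has \textbf{UBSP}, so it has \textbf{DBSP} by Lemma \ref{1}(ii). Now \textbf{DBSP} is a lattice-isomorphic invariant and passes to closed sublattices (a bounded disjoint sequence in a closed sublattice is bounded and disjoint in $E$, and the limit of a Ces\`{a}ro-convergent subsequence stays inside the sublattice). Hence a lattice copy of $\ell_1$ inside $E$ would force \textbf{DBSP} for $\ell_1$; but the unit vector basis of $\ell_1$ is bounded and disjoint, while the averages $\frac1m\sum_{k=1}^m e_{n_k}$ all have norm $1$ yet tend to $0$ coordinatewise as $m\to\infty$, so no subsequence is Ces\`{a}ro convergent. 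Therefore $E$ contains no lattice copy of $\ell_1$; note \textbf{WBSP} is not used here.

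The only step with real content is the dichotomy behind the claim in the ``if'' part: a norm bounded positive disjoint sequence is weakly null unless some subsequence of it spans a lattice copy of $\ell_1$; the rest is bookkeeping with previously proved lemmas. A second route to the ``if'' direction, avoiding the order continuity of $E'$: once every bounded disjoint sequence in $E$ is weakly null, \textbf{WBSP} gives \textbf{DBSP} at once, and since $\ell_\infty$ fails \textbf{WBSP} (as already used in the proof of Theorem~\ref{4}) a \textbf{WBSP} space has no lattice copy of $\ell_\infty$; Theorem~\ref{4} then delivers \textbf{UBSP}.
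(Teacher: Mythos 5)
Your proof is correct and follows essentially the same route as the paper: the ``if'' direction reduces the hypothesis ``no lattice copy of $\ell_1$'' to order continuity of $E'$ (the content of \cite[Theorem 4.69]{AB}, part of which you re-derive by hand) and then invokes Lemma \ref{1}(i), while the ``only if'' direction exploits that the standard basis of $\ell_1$ is bounded and disjoint, so no Ces\`{a}ro-convergence property survives a lattice embedding of $\ell_1$. The additional detail you supply (the explicit construction of the $\ell_1$-copy from a non-weakly-null disjoint sequence, and the heredity of {\bf DBSP} under passing to closed sublattices) merely unpacks what the paper cites or leaves implicit.
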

\begin{proof}
The "only if " part follows from this fact that $\ell_1$ possesses {\bf WBSP} but it does not have {\bf UBSP}; consider the standard basis $(e_n)$ which is $uaw$-null by \cite[Lemma 2]{Z}. For the converse, it follows from \cite[Theorem 4.69]{AB} that $E'$ is order continuous so that by Lemma \ref{1}, {\bf WBSP} implies {\bf UBSP}.

\end{proof}
Now, we establish the following result regarding the relation between {\bf DWBSP} and {\bf UBSP}.
\begin{corollary}
A $\sigma$-order complete Banach lattice $E$ with {\bf DWBSP} possesses {\bf UBSP} if and only if it does not possess a lattice copy of either $\ell_1$ or $\ell_{\infty}$.
\end{corollary}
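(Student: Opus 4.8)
The plan is to reduce the statement to Theorem~\ref{4}: under the present hypotheses \textbf{DWBSP} turns out to be as strong as \textbf{DBSP}. The bridge is the lattice fact $(\star)$: \emph{a Banach lattice containing no lattice copy of $\ell_1$ has every norm bounded disjoint sequence weakly null}. To prove $(\star)$ I would argue by contraposition. If $(d_k)$ is a norm bounded disjoint sequence that is not weakly null, pass to a subsequence and choose $f\in E'$ and $\varepsilon>0$ with $|f(d_{k_j})|\ge\varepsilon$ for all $j$; then $g:=|f|\in E'_{+}$ satisfies $g(|d_{k_j}|)\ge\varepsilon$, while $\norm{|d_{k_j}|}\le M$ for some $M$. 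By disjointness, for every finitely supported scalar sequence $(a_j)$ we have $\norm{\sum_j a_j|d_{k_j}|}=\norm{\sum_j|a_j|\,|d_{k_j}|}$, which lies between $\varepsilon\norm{g}^{-1}\sum_j|a_j|$ and $M\sum_j|a_j|$; hence the closed sublattice of $E$ generated by $(|d_{k_j}|)$ is a lattice copy of $\ell_1$, a contradiction.

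For the reverse implication, assume $E$ is $\sigma$-order complete, has \textbf{DWBSP}, and contains no lattice copy of $\ell_1$ or $\ell_\infty$. Here $\sigma$-order completeness does its work: a $\sigma$-order complete Banach lattice with no lattice copy of $\ell_\infty$ is order continuous (see \cite{Ni}), so I can rerun the disjointification from the proof of Theorem~\ref{4}. Given a norm bounded $uaw$-null sequence $(x_n)$, \cite[Theorem 4]{Z} and \cite[Theorem 3.2]{DOT} furnish a subsequence $(x_{n_k})$ and a disjoint sequence $(d_k)$ with $\norm{x_{n_k}-d_k}\to0$. By $(\star)$ the bounded disjoint sequence $(d_k)$ is weakly null, so \textbf{DWBSP} yields a subsequence $(d_{k_i})$ whose Ces\`aro means converge, and being weakly null they converge to $0$. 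Hence $\frac1m\sum_{i=1}^{m}x_{n_{k_i}}=\frac1m\sum_{i=1}^{m}d_{k_i}+\frac1m\sum_{i=1}^{m}(x_{n_{k_i}}-d_{k_i})\to0$, so $E$ has \textbf{UBSP}. (Equivalently, $(\star)$ says \textbf{DWBSP} implies \textbf{DBSP}, and then Theorem~\ref{4} applies at once.)

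For the direct implication, assume $E$ is $\sigma$-order complete with \textbf{DWBSP} and \textbf{UBSP}. First I rule out a lattice copy of $\ell_1$: if a sublattice $L\subseteq E$ were lattice isomorphic to $\ell_1$, its unit-vector basis $(e_n)\subseteq E$ would be disjoint, hence $uaw$-null by \cite[Lemma 2]{Z}; but for any subsequence the Ces\`aro means $\frac1m\sum_{k=1}^{m}e_{n_k}$ lie in the closed subspace $L$ and, transported to $\ell_1$, are not Cauchy (along $m,2m,4m,\dots$ they stay bounded away from one another), so they do not converge in $E$ --- contradicting \textbf{UBSP}. With $\ell_1$ excluded, $(\star)$ upgrades \textbf{DWBSP} to \textbf{DBSP}; since $E$ also has \textbf{UBSP}, Theorem~\ref{4} forces $E$ to contain no lattice copy of $\ell_\infty$, which completes the proof.

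The step I expect to need the most care is $(\star)$: one must verify that the closed linear span of a norm bounded positive disjoint sequence on which a fixed positive functional is bounded below is genuinely a sublattice of $E$ (automatic from disjointness) and that the two-sided norm estimate produces a \emph{lattice} copy of $\ell_1$, not merely a Banach-space copy. The other thing to keep track of is the role of $\sigma$-order completeness: it is used only to promote ``no lattice copy of $\ell_\infty$'' to order continuity, the hypothesis actually consumed by the disjointification taken from Theorem~\ref{4}, and the statement really does fail without it --- for instance $C[0,1]$ has \textbf{DBSP} and contains a lattice copy of neither $\ell_1$ nor $\ell_\infty$, yet it fails \textbf{UBSP}.
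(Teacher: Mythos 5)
Your proof is correct and follows essentially the same route as the paper's: the paper simply cites \cite[Theorem 4.69]{AB} and \cite[Theorem 4.56]{AB} to get order continuity of $E$ and $E'$ from the absence of lattice copies of $\ell_1$ and $\ell_\infty$, cites \cite[Proposition 6.15]{GTX} for \textbf{DWBSP}$\Rightarrow$\textbf{DBSP}, and then invokes Theorem~\ref{4}, while you unpack those citations into a self-contained argument (your fact $(\star)$ is precisely one equivalence of \cite[Theorem 4.69]{AB}, and your disjointification is the proof of Theorem~\ref{4} rerun). Your handling of the forward direction is in fact more careful than the paper's one-line appeal to ``$\ell_1$ and $\ell_\infty$ have \textbf{DWBSP} but not \textbf{UBSP},'' and your closing observation that $C[0,1]$ has \textbf{DBSP} and no lattice copy of $\ell_1$ or $\ell_\infty$ yet fails \textbf{UBSP} is a sharp, correct point: it shows the ``if'' direction of Theorem~\ref{4} really only holds under order continuity (which is what its proof uses), so the $\sigma$-order completeness hypothesis in the present corollary is doing genuine work.
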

\begin{proof}
The "only if" part is done by using this fact that both $\ell_1$ and $\ell_{\infty}$ possess {\bf DWBSP} but they fail to have {\bf UBSP}.
Now, suppose $E$ contains no lattice copy of either $\ell_1$ or $\ell_{\infty}$. By \cite[Theorem 4.69]{AB} and \cite[Theorem 4.56]{AB}, $E$ and $E'$ have order continuous norms. By \cite[Proposition 6.15]{GTX}, {\bf DWBSP} implies {\bf DBSP} and by Theorem \ref{4}, {\bf DBSP} results {\bf UBSP}.

\end{proof}
Suppose $E$ is an atomic $KB$-space. Recall that by \cite[Proposition 14]{Z}, every bounded sequence in $E$ has a $uaw$-convergent subsequence. So, we have the following.

\begin{proposition}
Suppose $E$ is an atomic $KB$-space which possesses {\bf UBSP}. Then it also has {\bf BSP}.

\end{proposition}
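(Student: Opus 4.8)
The plan is to reduce \textbf{BSP} to \textbf{UBSP} by exploiting the fact that, in an atomic $KB$-space, every norm bounded sequence has a $uaw$-convergent subsequence. First I would take an arbitrary norm bounded sequence $(x_n)\subseteq E$. By \cite[Proposition 14]{Z}, there is a subsequence $(x_{n_k})$ and some $x\in E$ with $x_{n_k}\xrightarrow{uaw}x$. Then the sequence $(x_{n_k}-x)$ is norm bounded (since $x$ is a fixed vector and $(x_{n_k})$ is bounded) and $uaw$-null (because $uaw$-convergence is linear, equivalently $x_{n_k}\xrightarrow{uaw}x$ means $x_{n_k}-x\xrightarrow{uaw}0$).

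Next, since $E$ possesses \textbf{UBSP}, there is a further subsequence $(x_{n_{k_j}}-x)_j$ whose Ces\`aro means converge in norm, say to some $y\in E$. The Ces\`aro means of the constant sequence equal to $x$ are all equal to $x$, so the Ces\`aro means of $(x_{n_{k_j}})_j$ converge in norm to $y+x$. Thus the original bounded sequence $(x_n)$ has a subsequence whose Ces\`aro means converge in norm, which is precisely what is required for $E$ to have \textbf{BSP}.

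I do not anticipate any substantial obstacle here: the argument is essentially a translation trick combined with the cited subsequence-extraction result. The only points that need to be recorded are that subtracting the $uaw$-limit of a bounded $uaw$-convergent sequence yields a norm bounded $uaw$-null sequence, and that shifting a sequence by a fixed vector leaves the convergence (and the limit, up to that same shift) of its Ces\`aro means unaffected; both are routine.
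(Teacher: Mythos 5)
Your argument is correct and is exactly the route the paper intends: it introduces the proposition with the remark that, by \cite[Proposition 14]{Z}, every bounded sequence in an atomic $KB$-space has a $uaw$-convergent subsequence, and then applies \textbf{UBSP} to the translated $uaw$-null sequence. Your write-up simply makes the translation step explicit; no issues.
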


\end{document}